\documentclass[11 pt,a4paper]{article}
\usepackage{amsmath, amsthm,amssymb,enumerate,mathrsfs}
\usepackage{tikz}
\usetikzlibrary{calc,shapes}
\usepackage{hyperref}
\usepackage{enumitem}
\usepackage{cite}
\usepackage{breqn}
\usepackage[margin=1.1in]{geometry}

\usepackage[multiple]{footmisc}
\makeatletter
\renewcommand*\@fnsymbol[1]{\the#1}  
\makeatother


\title{Extremal Bounds for Bootstrap Percolation in the Hypercube}
\author{Natasha Morrison\thanks{Department of Pure Mathematics and Mathematical Statistics, University of
Cambridge, Wilberforce Road, Cambridge CB3 0WB, UK. E-mail: \texttt{morrison@dpmms.cam.ac.uk}.}$^{\text{ ,}}$\footnotemark[3]
\and 
Jonathan A. Noel\thanks{Department of Computer Science and DIMAP, University of Warwick, Coventry CV4 7AL,
UK. E-mail: \texttt{j.noel@warwick.ac.uk}.}$^{\text{ ,}}$\thanks{This work was completed while both authors were DPhil students at the University of Oxford.}$^{\text{ ,}}$\thanks{Work of the second author was supported in part by a Postgraduate Scholarship from the Natural Sciences and Engineering Research Council of Canada.}}

\newtheoremstyle{case}{}{}{\normalfont}{}{\itshape}{:}{ }{}


\newtheorem{thm}[equation]{Theorem}
\newtheorem{lem}[equation]{Lemma}

\newtheorem{conj}[equation]{Conjecture}

\newtheorem{ques}[equation]{Question}
\newtheorem{prob}[equation]{Problem}

\theoremstyle{definition}
\newtheorem{defn}[equation]{Definition}

\newtheorem*{ack}{Acknowledgements}

\newtheorem{noteAdded}{Note Added in the Proof}

\newtheoremstyle{case}{}{}{\normalfont}{}{\itshape}{\normalfont:}{ }{}

\theoremstyle{case}
\newtheorem{case}{Case}

\newtheorem{case3}{Case}

\numberwithin{equation}{section}

\DeclareMathOperator{\wsat}{wsat}
\DeclareMathOperator{\vspan}{span}
\DeclareMathOperator{\vdim}{dim}
\DeclareMathOperator{\supp}{supp}

\date{}

\begin{document}

\maketitle

\begin{abstract}
The \emph{$r$-neighbour bootstrap percolation process} on a graph $G$ starts with an initial set $A_0$ of ``infected'' vertices and, at each step of the process, a healthy vertex becomes infected if it has at least $r$ infected neighbours (once a vertex becomes infected, it remains infected forever). If every vertex of $G$ eventually becomes infected, then we say that $A_0$ \emph{percolates}. 

We prove a conjecture of Balogh and Bollob\'{a}s which says that, for fixed $r$ and $d\to\infty$, every percolating set in the $d$-dimensional hypercube has cardinality at least $\frac{1+o(1)}{r}\binom{d}{r-1}$. We also prove an analogous result for multidimensional rectangular grids. Our proofs exploit a connection between bootstrap percolation and a related process, known as \emph{weak saturation}. In addition, we improve on the best known upper bound for the minimum size of a percolating set in the hypercube. In particular, when $r=3$, we prove that the minimum cardinality of a percolating set in the $d$-dimensional hypercube is $\left\lceil\frac{d(d+3)}{6}\right\rceil+1$ for all $d\geq3$. 
\end{abstract}

\section{Introduction}

Given a positive integer $r$ and a graph $G$, the \emph{$r$-neighbour bootstrap percolation process} begins with an initial set of ``infected'' vertices of $G$ and, at each step of the process, a vertex becomes infected if it has at least $r$ infected neighbours. More formally, if $A_0$ is the initial set of infected vertices, then the set of vertices that are infected after the $j$th step of the process for $j\geq1$ is defined by
\[A_j:=A_{j-1}\cup\left\{v\in V(G): \left|N_G(v)\cap A_{j-1}\right|\geq r\right\},\]
where $N_G(v)$ denotes the neighbourhood of $v$ in $G$. We say that $A_0$ \emph{percolates} if $\bigcup_{j=0}^\infty A_j=V(G)$. Bootstrap percolation was introduced by Chalupa, Leath and Reich~\cite{Chalupa} as a mathematical simplification of existing dynamic models of ferromagnetism, but it has also found applications in the study of other physical phenomena such as crack formation and hydrogen mixtures (see Adler and Lev~\cite{AdlerLev}). In addition, advances in bootstrap percolation have been highly influential in the study of more complex processes including, for example, the Glauber dynamics of the Ising model~\cite{RobIsing}.

The main extremal problem in bootstrap percolation is to determine the minimum cardinality of a set which percolates under the $r$-neighbour bootstrap percolation process on $G$; we denote this by $m(G,r)$. An important case is when $G$ is the \emph{$d$-dimensional hypercube} $Q_d$; i.e., the graph with vertex set $\{0,1\}^d$ in which two vertices are adjacent if they differ in exactly one coordinate. Balogh and Bollob\'{a}s~\cite{conj} (see also~\cite{Highdim, LinAlg}) made the following conjecture. 

\begin{conj}[Balogh and Bollob\'{a}s~\cite{conj}] 
\label{theConj}
For fixed $r\geq3$ and $d\to\infty$,
\[m(Q_d,r) = \frac{1+o(1)}{r}\binom{d}{r-1}.\]
\end{conj}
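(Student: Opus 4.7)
The plan is to prove Conjecture \ref{theConj} by establishing matching upper and lower bounds, with the new content lying in the lower bound.

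For the upper bound, I would seek an explicit percolating set of size $(1+o(1))\binom{d}{r-1}/r$. A natural starting point is the set of all vertices of $Q_d$ of weight exactly $r-1$, which has cardinality $\binom{d}{r-1}$ and is easily shown to percolate by induction on weight. To save the factor of $r$, one restricts to a cleverly chosen subfamily of $(r-1)$-subsets of $[d]$ of density roughly $1/r$, designed so that the remaining weight-$(r-1)$ vertices are recovered as the bootstrap process unfolds; constructions of this flavour appear in the cited works \cite{Highdim,LinAlg}.

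For the lower bound, the plan is to exploit the connection between bootstrap percolation and weak saturation. Given any $r$-percolating set $A_0\subseteq V(Q_d)$, I would associate to each vertex $v$ a vector $x_v$ in some vector space $V$ satisfying the following \emph{$r$-span property}: whenever $v$ becomes infected because $r$ of its neighbors $u_1,\dots,u_r$ are already infected, one has $x_v\in\vspan\{x_{u_1},\dots,x_{u_r}\}$. Iterating along the infection order forces
\[
\vspan\{x_v : v \in V(Q_d)\} \ =\ \vspan\{x_v : v \in A_0\},
\]
so $|A_0|\geq\vdim\vspan\{x_v:v\in V(Q_d)\}$. The lower bound therefore reduces to producing vectors $x_v$ with the $r$-span property whose global span has dimension at least $(1+o(1))\binom{d}{r-1}/r$.

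The main obstacle I foresee is the algebraic construction of these vectors. The $r$-span property must be verified for \emph{every} choice of $r$ among the $d$ neighbors of each vertex $v$, a very restrictive condition; naive candidates built from products of single-coordinate characters of $\mathbb{F}_2^d$ fail already when $r=3$. A plausible approach is to place the $x_v$ in a space whose coordinates are indexed by $(r-1)$-subsets $S\subseteq[d]$, with each $S$-coordinate given by an exterior-algebra or tensor-power expression in characters of $\mathbb{F}_2^d$, in the spirit of the classical algebraic proofs for $\wsat(K_n,K_r)$. I would expect such a construction to yield a lower bound of the form $c(r)\binom{d}{r-1}$ without excessive difficulty; the bulk of the technical work should then be in sharpening $c(r)$ to the conjectured $1/r$, where the factor $r$ intuitively reflects the fact that each non-initial infection certifies one of $\binom{d}{r-1}$ candidate constraints using $r$ distinct neighbors.
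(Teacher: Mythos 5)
Your reduction lemma is sound as far as it goes: if vectors $x_v$ satisfy the $r$-span property along the infection order, then indeed $|A_0|\geq \vdim\left(\vspan\{x_v: v\in V(Q_d)\}\right)$. The genuine gap is that you never produce the vectors, and the vertex-based framework you propose is provably far more rigid than you acknowledge. Since the percolating set and infection order are unknown, you need (as you say) the $r$-span property for \emph{every} $r$-subset of the $d$ neighbours of every vertex. But if $x_v\neq 0$ and $v$ has $r+1$ neighbours $u_0,\dots,u_r$ whose vectors are linearly independent, then expanding $x_v$ in these vectors and using the $r$-subsets omitting each $u_i$ in turn forces every coefficient to vanish, i.e.\ $x_v=0$; hence the full neighbourhood of any vertex carrying a nonzero vector spans a space of dimension at most $r$. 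Likewise at most $r-1$ neighbours of such a vertex may carry the zero vector, and any two neighbours with parallel vectors (or any $r$ neighbours spanning fewer than $r$ dimensions) clamp $x_v$ into a correspondingly smaller space. These local collapses propagate: already for $r=2$ the four vectors on each $2$-face span the same ($\leq 2$)-dimensional space as each diagonal pair, and faces sharing an edge whose endpoints carry independent vectors share that space, so natural attempts collapse to bounded dimension instead of the required $\Theta(d)$, let alone $\Theta(d^{r-1})$ for general $r$. The assertion that an exterior-algebra construction ``in the spirit of $\wsat(K_n,K_k)$'' should give $c(r)\binom{d}{r-1}$ ``without excessive difficulty'' is precisely the missing step, and it is where all the difficulty lies: before this paper the best known lower bound for fixed $r\geq 3$ was only linear in $d$.

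The paper sidesteps this rigidity by putting the linear algebra on the \emph{edges} rather than the vertices. It first uses the cheap inequality $m(G,r)\geq \wsat(G,S_{r+1})/r$ (Lemma~\ref{edge}) --- this, and not a refined count of constraints, is the source of the factor $1/r$ --- and then determines $\wsat(Q_d,S_{r+1})$ exactly (Theorem~\ref{wsatcube}) via Lemma~\ref{linalg}, constructing edge vectors $f_e$ by induction on $d$ so that at each vertex $\sum_{i=1}^d x_if_{e(v,i)}=0$ for all $x$ in a $(d-r)$-dimensional subspace whose nonzero vectors have support at least $r+1$ (Lemma~\ref{support}). The star constraint only requires one dependency with nonzero coefficients among the $r+1$ edge vectors of each copy of $S_{r+1}$, which is much weaker than membership of a fixed vertex vector in every $r$-wise span, and this is what lets the global span reach dimension roughly $\binom{d}{r-1}$. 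Separately, your upper-bound sketch does not match the cited construction: the known percolating set of size $\frac{1+o(1)}{r}\binom{d}{r-1}$ consists of \emph{all} of level $r-2$ together with an approximate Steiner system on level $r$; a density-$1/r$ subfamily of level $r-1$ on its own does not obviously percolate, since a weight-$r$ vertex needs all $r$ of its level-$(r-1)$ neighbours infected before anything above that level can ignite.
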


The upper bound of Conjecture~\ref{theConj} is not difficult to prove. Simply let $A_0$ consist of all vertices on ``level $r-2$'' of $Q_d$ and an approximate Steiner system on level $r$, whose existence is guaranteed by an important theorem of R\"{o}dl~\cite{Rodl}; see Balogh, Bollob\'{a}s and Morris~\cite{Highdim} for more details. Note that, under certain conditions on $d$ and $r$, the approximate Steiner system in this construction can be replaced with an exact Steiner system (using, for example, the celebrated result of Keevash~\cite{Keevash}). In this special case, the percolating set has cardinality $\frac{1}{r}\binom{d}{r-1} + \binom{d}{r-2}$ which yields
\begin{equation}
\label{Steiner}
m\left(Q_d,r\right) \leq  \frac{d^{r-1}}{r!} + \frac{d^{r-2}(r+2)}{2r(r-2)!}+O\left(d^{r-3}\right).
\end{equation}
Lower bounds have been far more elusive; previously, the best known lower bound on $m\left(Q_d,r\right)$ for fixed $r\geq3$ was only linear in $d$~\cite{Highdim}. In this paper, we prove Conjecture~\ref{theConj}. 

\begin{thm}
\label{hyper}
For $d\geq r\geq1$, 
\[m\left(Q_d,r\right)\geq 2^{r-1} + \sum_{j=1}^{r-1}\binom{d-j-1}{r-j}\frac{j2^{j-1}}{r}\]
where, by convention, $\binom{a}{b}=0$ when $a<b$. 
\end{thm}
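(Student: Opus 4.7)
The plan is to prove the lower bound by induction on the dimension $d$, using a slicing argument together with the connection between bootstrap percolation and weak saturation that is emphasised in the abstract. Write $g(d,r)$ for the right-hand side of the theorem. The base case $d=r$ asserts $m(Q_r,r)\geq 2^{r-1}$, which is immediate from the fact that every vertex of $Q_r$ has degree exactly $r$: a non-initially-infected vertex can only become infected once all of its neighbours are, and a parity argument on the natural bipartition of $Q_r$ then forces $A_0$ to contain an entire colour class.

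For the inductive step, partition $V(Q_d)=Q^0\sqcup Q^1$ along the last coordinate, with each $Q^\epsilon\cong Q_{d-1}$, and let $v'$ denote the cross-neighbour of $v\in Q^\epsilon$. Given a percolating set $A_0$ with infection time function $t$, define
\[
C^\epsilon:=\{v\in Q^\epsilon\setminus A_0\;:\;t(v')<t(v)\}\qquad\text{and}\qquad B^\epsilon:=(A_0\cap Q^\epsilon)\cup C^\epsilon.
\]
Replaying the infection on $Q^\epsilon$ in time order, every vertex outside $B^\epsilon$ must have had $r$ already-infected neighbours inside $Q^\epsilon$ before its own infection; hence $B^\epsilon$ percolates in $Q^\epsilon$ under $r$-neighbour bootstrap. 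By the inductive hypothesis, $|B^\epsilon|\geq g(d-1,r)$, which gives $|A_0|+|C^0|+|C^1|\geq 2g(d-1,r)$.

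The main obstacle is that $|C^0|+|C^1|$ can in principle be as large as $2^{d-1}$, so this naive bound does not close the recursion. Resolving this is where the weak saturation framework enters: each $v\in C^\epsilon$ was infected with the help of its cross-neighbour, so at time $t(v)-1$ it had at least $r-1$ infected neighbours inside $Q^\epsilon$, meaning that $C^\epsilon$ behaves like the active set of an $(r-1)$-style weak saturation process on $Q^\epsilon$ driven by $A_0\cap Q^\epsilon$. The technically hardest step will be to quantify this: combining a secondary $(r-1)$-process bound with the primary induction (essentially a double induction on $d$ and $r$) should produce an upper bound on $|C^0|+|C^1|$ that exactly matches the discrepancy $2g(d-1,r)-g(d,r)$ and thereby closes the recursion. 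I expect the precise coefficients $\frac{j2^{j-1}}{r}$ in the statement to be the algebraic shadow of this combined recursion, with the weak saturation bookkeeping making the relevant binomial identities fall out cleanly.
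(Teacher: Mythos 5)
There is a genuine gap: the inductive step is never carried out, and the specific plan for closing it cannot work. You reduce the problem to showing $|C^0|+|C^1|\leq 2g(d-1,r)-g(d,r)$, a quantity of order $d^{r-2}$, but no upper bound of this kind holds for arbitrary percolating sets. For example, take $A_0$ to be the union of an $r$-percolating set for $Q^0\cong Q_{d-1}$ and an $(r-1)$-percolating set for $Q^1$ (of size only $O(d^{r-2})$); this percolates in $Q_d$, yet essentially every vertex of $Q^1$ becomes infected after its cross-neighbour, so $|C^1|=2^{d-1}-o(2^{d-1})$, vastly exceeding the allowed discrepancy. So the heart of the proof — the ``secondary $(r-1)$-process bound'' that you hope will make the binomial identities ``fall out cleanly'' — is not only missing but, as formulated (a uniform bound on $|C^0|+|C^1|$), false; any rescue would require trading the size of $C^\epsilon$ against extra initially infected vertices, which is exactly the bookkeeping that direct vertex-counting inductions have failed to achieve (previously the best lower bounds obtained this way were only linear in $d$). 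Your base case is also shakier than stated: it is not true that every percolating set for the $r$-neighbour process on $Q_r$ contains a full colour class (in $Q_3$, the complement of an antipodal pair percolates under the $3$-neighbour process and contains neither class); the bound $m(Q_r,r)\geq 2^{r-1}$ is correct, but needs a different argument, e.g.\ a perimeter count showing each infection removes exactly $r$ boundary edges.

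For comparison, the paper does not bound $m(Q_d,r)$ by a vertex recursion at all. It first proves $m(G,r)\geq \wsat(G,S_{r+1})/r$ (Lemma~\ref{edge}), and then determines $\wsat(Q_d,S_{r+1})$ exactly (Theorem~\ref{wsatcube}) via the linear-algebraic lower-bound method of Lemma~\ref{linalg}: one assigns a vector $f_e$ to each edge so that every copy of $S_{r+1}$ carries a linear dependence, and shows by induction on $d$ (splitting $Q_d$ into two copies of $Q_{d-1}$, with an $S_{r+1}$-saturation structure on one half and an $S_r$-structure on the other) that the span of all $f_e$ has the full dimension $w$. The division by $r$ in the coefficients you were trying to explain comes from the $\wsat$-to-percolation inequality, not from any vertex-level recursion, which is another sign that a purely combinatorial slicing argument of the type you propose is unlikely to produce the stated bound.
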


For fixed $r\geq3$, Theorem~\ref{hyper} implies
\[m(Q_d,r)\geq \frac{d^{r-1}}{r!} + \frac{d^{r-2}(6-r)}{2r(r-2)!}+\Omega\left(d^{r-3}\right),\]
which differs from the upper bound in \eqref{Steiner} by an additive term of order $\Theta\left(d^{r-2}\right)$. We will also provide a recursive upper bound on $m\left(Q_d,r\right)$, which improves on the second order term of \eqref{Steiner}. For $r=3$, we combine this recursive bound with some additional arguments to show that Theorem~\ref{hyper} is tight in this case. 

\begin{thm}
\label{r=3Upper}
For $d\geq3$, we have $m(Q_d,3)=\left\lceil\frac{d(d+3)}{6}\right\rceil+1$.
\end{thm}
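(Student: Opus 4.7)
Substituting $r = 3$ into Theorem~\ref{hyper} and simplifying the sum gives
\[
m(Q_d, 3) \;\ge\; 2^{2} + \binom{d-2}{2}\tfrac{1}{3} + \binom{d-3}{1}\tfrac{4}{3} \;=\; 4 + \tfrac{(d-3)(d+6)}{6} \;=\; \tfrac{d(d+3)}{6} + 1.
\]
Since $m(Q_d, 3)$ is an integer, this strengthens to $m(Q_d,3)\ge \lceil d(d+3)/6\rceil+1$, matching the claimed value.

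\textbf{Upper bound.} I would establish the matching upper bound by induction on $d$ in increments of three. The key numerical observation is that $(d+3)(d+6)/6$ and $d(d+3)/6$ differ by the integer $d+3$, whence
\[
\bigl\lceil (d+3)(d+6)/6\bigr\rceil - \bigl\lceil d(d+3)/6\bigr\rceil \;=\; d+3.
\]
It therefore suffices to verify the three base cases $d\in\{3,4,5\}$ explicitly (the four even-weight vertices $\{000,011,101,110\}$ percolate $Q_3$; one exhibits a size-$6$ percolating set of $Q_4$ such as $\{0000,1100,0110,0011,1001,1111\}$; and a similar size-$8$ construction works for $Q_5$), and to prove the recursive inequality $m(Q_{d+3},3) \le m(Q_d,3)+(d+3)$. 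For the latter, view $Q_{d+3}=Q_d\mathbin{\square} Q_3$ as eight copies of $Q_d$ indexed by vertices of $Q_3$, place a minimum percolating set $A \subseteq Q_d$ in the copy at $000$, and add $d+3$ carefully chosen seed vertices distributed across a few of the other copies. Once the copy at $000$ has percolated, each vertex of any adjacent copy already has one infected neighbour across its $Q_3$-edge, so the local bootstrap process in that copy is effectively a $2$-neighbour process; once two adjacent copies are fully infected, the process in the next copy reduces to a single-neighbour trigger. The recursive upper bound announced in the paragraph preceding Theorem~\ref{r=3Upper} is designed for exactly this layer-by-layer cascade across $Q_3$.

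\textbf{Main obstacle.} The delicate part is matching the precise count $d+3$ rather than merely $d+O(1)$. A naive Steiner-based seeding overshoots by $\Theta(d)$ (this is exactly the gap between (\ref{Steiner}) and Theorem~\ref{hyper}), so the construction must exploit the inherited $Q_3$-neighbours as efficiently as possible. I expect the ``additional arguments'' referred to by the authors to consist of a short case analysis on $d \bmod 3$, together with an explicit tight choice of seed vertices -- likely a minimum $2$-neighbour percolating set in one of the copies adjacent to $000$ augmented with a handful of ``trigger'' vertices to initiate cascades in the remaining copies -- so that the total additional cost is exactly $d+3$ uniformly in the residue class of $d$.
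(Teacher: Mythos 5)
Your lower bound is exactly the paper's: substituting $r=3$ into Theorem~\ref{hyper} and rounding up is all that is needed, and your computation is correct. The gap is in the upper bound. The uniform step $m(Q_{d+3},3)\leq m(Q_d,3)+(d+3)$ that your induction rests on is true as a statement about the actual values, but your sketched construction does not prove it for all $d$. Writing $Q_{d+3}$ as eight copies of $Q_d$ indexed by $Q_3$ and seeding as you describe costs, beyond the minimum percolating set in the copy at $000$, at least $2m(Q_d,2)+m(Q_d,1)=2\lceil d/2\rceil+3$: each of the two other copies at level one of $Q_3$ sees only \emph{one} infected cross-neighbour per vertex, so it needs a full $2$-neighbour percolating set of $Q_d$, and one more seed is needed at $111$ (copies at level two are free only after level one is completely infected). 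This is exactly the cascade formalised in Lemma~\ref{generalRUpper}. When $d$ is even this equals $d+3$ and your step goes through (this is the paper's case of odd target dimension, using $m(Q_d,2)=d/2+1$); but when $d$ is odd, $m(Q_d,2)=(d+1)/2+1$ and the construction costs $d+4$, overshooting by one. No choice of seeds inside this $Q_3$-layer scheme recovers that unit, and your proposed case analysis on $d\bmod 3$ does not touch the issue, which is governed by the parity of $d$.

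This is precisely where the paper's ``additional arguments'' live: for odd target dimension it uses the $3$-step recursion of Lemma~\ref{generalRUpper}, but for even target dimension it jumps \emph{six} dimensions, decomposing $Q_d$ over an explicit $10$-vertex percolating set $A_0^6$ of $Q_6$ and seeding one copy of a $3$-neighbour percolating set, four copies of $2$-neighbour percolating sets and five single vertices in copies of $Q_{d-6}$, for a total increment of $2d-3$; accordingly it needs explicit base cases for all $d\in\{3,\dots,8\}$, not just $\{3,4,5\}$ (your $Q_3$ and $Q_4$ examples are fine, but the even-dimensional cases $6$ and $8$ cannot be reached by your recursion from them). So as written your induction step is unproven for half of the dimensions, and the missing idea is a genuinely different construction for the even case (or some other mechanism to save one seed when the smaller dimension is odd).
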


In order to prove Theorem~\ref{hyper}, we will exploit a relationship between bootstrap percolation and the notion of weak saturation introduced by Bollob\'{a}s~\cite{wsat}. Given fixed graphs $G$ and $H$, we say that a spanning subgraph $F$ of $G$ is \emph{weakly $(G,H)$-saturated} if the edges of $E(G)\setminus E(F)$ can be added to $F$, one edge at a time, in such a way that each edge completes a copy of $H$ when it is added. The main extremal problem in weak saturation is to determine the \emph{weak saturation number} of $H$ in $G$ defined by
\[\wsat(G,H):=\min\left\{|E(F)|: F\text{ is weakly $(G,H)$-saturated}\right\}.\]  
Weak saturation is very well studied (see, e.g.~\cite{Alon,Kalai1,Kalai2, MNS,MoshShap,Oleg}). Our proof of Theorem~\ref{hyper} relies on the following bound, which is easy to prove:
\begin{equation}\label{wsatperc}
m(G,r) \geq \frac{\wsat\left(G,S_{r+1}\right)}{r}
\end{equation}
where $S_{r+1}$ denotes the star with $r+1$ leaves. A slightly stronger version of \eqref{wsatperc} is stated and proved in the next section. We obtain an exact expression for the weak saturation number of $S_{r+1}$ in the $d$-dimensional hypercube $Q_d$. 

\begin{thm}
\label{wsatcube}
If $d \ge r \ge 0$, then
\[\wsat\left(Q_d, S_{r+1}\right) = r2^{r-1} + \sum_{j=1}^{r-1} \binom{d-j-1}{r-j}j 2^{j-1}.\]
\end{thm}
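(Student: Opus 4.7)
I plan to prove both bounds by induction on $d$, exploiting the recurrence $f(d, r) = f(d-1, r) + f(d-1, r-1)$ satisfied by the claimed formula $f(d,r) := r 2^{r-1} + \sum_{j=1}^{r-1} \binom{d-j-1}{r-j} j 2^{j-1}$ (routine via Pascal's identity). The base case is $d = r$: since $Q_r$ is $r$-regular, for any $uv \in E(Q_r)$ both endpoints have at most $r - 1$ other neighbours, so no edge can ever be added while completing a copy of $S_{r+1}$; hence $F$ must equal $Q_r$, and $\wsat(Q_r, S_{r+1}) = r 2^{r-1} = f(r,r)$.

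For the upper bound inductive step, I plan to prove
\[\wsat(Q_d, S_{r+1}) \leq \wsat(Q_{d-1}, S_{r+1}) + \wsat(Q_{d-1}, S_r)\]
by explicit construction. View $Q_d$ as two copies $Q^0, Q^1$ of $Q_{d-1}$ joined by a perfect matching $M$ in the new direction, and take $F = F^0 \cup F^1$ where $F^0 \subseteq E(Q^0)$ is a minimum weakly $(Q^0, S_{r+1})$-saturated subgraph and $F^1 \subseteq E(Q^1)$ is a minimum weakly $(Q^1, S_r)$-saturated subgraph. The saturation runs in three phases: (i) saturate $Q^0$ from $F^0$ by induction; (ii) add every matching edge, legal because every vertex of $Q^0$ now has degree $d-1 \geq r$ in the current graph; (iii) saturate $Q^1$, legal because with $M$ fully present each $v \in Q^1$ has current degree $\deg_{Q^1}(v)+1$, so an edge of $Q^1$ completes an $S_{r+1}$ in $Q_d$ iff an endpoint has $Q^1$-degree at least $r-1$, which is exactly the $(Q^1, S_r)$-saturation condition guaranteed by $F^1$.

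For the lower bound, I plan to apply Kalai's algebraic method via an analogous induction. If one assigns to each edge $e \in E(Q_d)$ a vector $\phi(e) \in V$ satisfying
\[\dim \operatorname{span}\{\phi(uv) : v \in N_{Q_d}(u)\} \leq r\]
for every vertex $u$, then since every copy of $S_{r+1}$ is a set of $r+1$ edges incident to a common center, each edge's vector lies in the span of the remaining $r$; a standard inductive argument on the order of edge addition then yields $\wsat(Q_d, S_{r+1}) \geq \dim \operatorname{span}\{\phi(e) : e \in E(Q_d)\}$. Build $\phi$ recursively, mirroring the upper bound: on $Q^0$ use an inductively designed $\phi^0$ achieving $f(d-1, r)$ for $(Q_{d-1}, S_{r+1})$ (rank $\leq r$ at each vertex) inside a subspace $V^0 \subseteq V$, on $Q^1$ use an inductively designed $\phi^1$ achieving $f(d-1, r-1)$ for $(Q_{d-1}, S_r)$ (rank $\leq r-1$) inside an independent subspace $V^1$, and set $\phi(e) = 0$ on every matching edge. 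The rank constraint at each $u \in Q^0$ holds because $\phi^0$ satisfies it and the matching contribution is zero, and at each $u' \in Q^1$ it holds with room to spare since rank $\leq r-1 < r$; meanwhile the total span has dimension $\dim V^0 + \dim V^1 = f(d-1,r) + f(d-1,r-1) = f(d,r)$. The base cases simply take $\phi$ to send distinct edges of $Q_r$ (respectively $Q_{r-1}$) to distinct basis vectors, which trivially satisfies both the rank bound and the required dimension.

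The main obstacle is setting up the algebraic induction so that both the rank constraints and the dimension count are maintained simultaneously; the direct-sum trick $V = V^0 \oplus V^1$ with $\phi$ vanishing on the matching turns out to be the right device, keeping the interaction between the two copies entirely trivial. The upper bound construction is then routine once the three-phase saturation order is correctly identified.
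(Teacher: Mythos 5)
Your upper-bound half is essentially the paper's own construction (split $Q_d$ into $Q^0,Q^1$, take a minimum weakly $(Q_{d-1},S_{r+1})$-saturated graph on $Q^0$ and a minimum weakly $(Q_{d-1},S_{r})$-saturated graph on $Q^1$, no matching edges, three-phase addition), and that part, together with the base case and the recurrence for $f(d,r)$, is fine. The gap is in the lower bound. The algebraic ``lemma'' you invoke is false as stated: if the $r+1$ vectors on the edges of a star span a space of dimension at most $r$, they are linearly dependent, but it does \emph{not} follow that each edge's vector lies in the span of the other $r$ --- the dependence may have zero coefficients. What the method (Lemma~\ref{linalg}) actually requires is that every copy of $S_{r+1}$ carries a dependence with \emph{all} coefficients nonzero; only then is the newly added edge's vector guaranteed to lie in the span of the edges already present, so that the chain of dimension equalities goes through. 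A per-vertex rank bound of $r$ is strictly weaker. Concretely, on the double star with central edge $uv$ and leaves $a_1,a_2$ at $u$ and $b_1,b_2$ at $v$, set $\phi(uv)=0$ and give the four pendant edges linearly independent vectors: your rank condition holds with $r=2$ and the span has dimension $4$, yet $\wsat(\cdot,S_3)=3$, so the criterion can certify a false bound.

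Worse, your own construction realizes exactly this failure inside $Q_d$, because you set $\phi=0$ on every matching edge and (in the base case) give the edges of $Q^0\simeq Q_r$ independent basis vectors. Take $d=r+1$ and $u\in Q^0$: the unique copy of $S_{r+1}$ at $u$ consists of the matching edge (vector $0$) and the $r$ independent $Q^0$-vectors, so it admits no all-nonzero dependence; and $F=Q_d$ minus one $Q^0$-edge at $u$ is weakly saturated via precisely this star, while the span of $\phi$ over $E(F)$ has dimension one less than over $E(Q_d)$ --- the dimension-preservation step of your argument breaks. This is exactly why the paper does not put the zero vector on direction-$d$ edges: it fixes a subspace $X\subseteq\mathbb{R}^d$ of dimension $d-r$ all of whose nonzero vectors have support of size at least $r+1$ (Lemma~\ref{support}), so that every $(r+1)$-set of directions is the exact support of some $x\in X$ (Lemma~\ref{Tsupp}); it then defines $f_{e(v,d)}:=-\tfrac{1}{z_d}\sum_{i<d}z_if_{e(v,i)}$ for $v\in Q^0$ and, on $Q^1$-edges, $f_e:=f^0_{e'}\oplus f^1_e$ rather than $0\oplus f^1_e$, carrying the ``shadow'' vector from $Q^0$ so that $\sum_i x_if_{e(v,i)}=0$ holds at every vertex for every $x\in X$ while the total span still has dimension $w_0+w_1$. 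Some device of this kind, enforcing all-nonzero star dependences across the two halves, is genuinely needed; the direct-sum-with-zeros shortcut does not suffice.
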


Theorem~\ref{hyper} follows directly from this theorem and \eqref{wsatperc}. We also determine an exact expression for the weak saturation number of $S_{r+1}$ in the $d$-dimensional $a_1\times\cdots \times a_d$ grid, denoted by $\prod_{i=1}^d[a_i]$. We state this result here in the case $d\geq r$; a more general result is expressed later in terms of a recurrence relation.

\begin{thm}
\label{genwsat}
For $d\geq r\geq1$ and $a_1,\dots,a_d\geq2$, 
\begin{dmath*}{\wsat\left(\prod_{i=1}^d[a_i],S_{r+1}\right)} = \sum_{\substack{S\subseteq [d]\\ |S|\leq r-1}}\left(\prod_{i\in S}(a_i-2)\right)\left({(r-|S|)2^{r-|S|-1}} + {\sum_{j=1}^{r-|S|-1}\binom{d-|S|-j-1}{r-|S|-j}j2^{j-1}}\right).\end{dmath*}
\end{thm}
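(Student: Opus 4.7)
The plan is to reduce Theorem~\ref{genwsat} to Theorem~\ref{wsatcube} via a recurrence on the grid side-lengths. Writing $g(a_1,\dots,a_d;r)$ for the right-hand side of Theorem~\ref{genwsat}, one splits the defining sum according to whether $d\in S$ and applies Pascal's identity inside the inner bracket to obtain the algebraic identity
\[
g(a_1,\dots,a_{d-1},a_d+1;r)-g(a_1,\dots,a_{d-1},a_d;r)=g(a_1,\dots,a_{d-1};r-1).
\]
Setting $H=\prod_{i=1}^{d-1}[a_i]$, $G=H\times[a_d]$, and $G^+=H\times[a_d+1]$, it therefore suffices, by induction on $\sum_i a_i$ with base case $a_1=\cdots=a_d=2$ (Theorem~\ref{wsatcube}), to prove the matching recurrence
\[
\wsat(G^+,S_{r+1})=\wsat(G,S_{r+1})+\wsat(H,S_r)
\]
for all $d\geq r\geq 1$ and $a_d\geq 2$, and then iterate coordinate-by-coordinate down to the hypercube.

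For the upper bound of the recurrence, view $G^+$ as $a_d+1$ slices $H_1,\dots,H_{a_d+1}$, each isomorphic to $H$, joined by matchings, with $G$ identified with the subgraph on the first $a_d$ slices. Take a minimum weakly $(G,S_{r+1})$-saturated $F_1$ on the first $a_d$ slices and a minimum weakly $(H,S_r)$-saturated $F_2$ on $H_{a_d+1}$. Then $F_1\cup F_2$ weakly $(G^+,S_{r+1})$-saturates $G^+$ in three stages: first, complete $G$ via the $F_1$-process; second, add every cross-edge $e_w$ between $H_{a_d}$ and $H_{a_d+1}$, using $(w,a_d)$ as active endpoint --- its degree in the current subgraph is $\deg_H(w)+1\geq d\geq r$, combining the minimum-degree bound $\delta(H)\geq d-1$ with the hypothesis $d\geq r$; third, run the $F_2$-process inside $H_{a_d+1}$, where each added edge has an active endpoint of $H$-degree $\geq r-1$ that is boosted to $G^+$-degree $\geq r$ by the cross-edge added in the second stage.

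The main obstacle will be the lower bound of the recurrence. Given an arbitrary weakly $(G^+,S_{r+1})$-saturated $F$, let $F_2=F\cap E(H_{a_d+1})$. Restricting the $F$-process to edges of $H_{a_d+1}$ shows that each such edge is added with an active endpoint of $G^+$-degree $\geq r$ whose only non-$H_{a_d+1}$ neighbour is through the single cross-edge to $H_{a_d}$; hence its $H$-degree is $\geq r-1$, which certifies that $F_2$ is weakly $(H,S_r)$-saturated and gives $|F_2|\geq\wsat(H,S_r)$. It remains to show $|F\setminus F_2|\geq\wsat(G,S_{r+1})$, and this is delicate because $F\setminus F_2$ may contain cross-edges between $H_{a_d}$ and $H_{a_d+1}$ that are not edges of $G$. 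I would attack this via a ``folding'' argument: replace each such cross-edge $(w,a_d)(w,a_d+1)\in F$ by the parallel $G$-edge $(w,a_d-1)(w,a_d)$, collapsing any resulting duplicates, and verify that the resulting subset of $E(G)$ weakly $(G,S_{r+1})$-saturates $G$. If this encounters obstructions, a Kalai-type linear-algebraic argument --- assigning vectors to edges so that every $S_{r+1}$-star yields a local linear dependence and bounding $\wsat(G^+,S_{r+1})$ below via ranks --- should serve as a fallback.
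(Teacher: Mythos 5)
Your reduction framework is sound and in fact mirrors the paper's: the algebraic identity for the right-hand side is correct (splitting the sum on whether $d\in S$ gives it directly; no Pascal identity is even needed), your recurrence $\wsat(G^+,S_{r+1})=\wsat(G,S_{r+1})+\wsat(H,S_r)$ is exactly the paper's recurrence in the regime $d\geq r$ (where the paper's boundary correction term vanishes), your upper-bound construction coincides with the paper's, and your argument that $F_2=F\cap E(H_{a_d+1})$ is weakly $(H,S_r)$-saturated is correct. The genuine gap is the other half of the lower bound, the claim $|F\setminus F_2|\geq\wsat(G,S_{r+1})$, which you yourself flag as delicate and never establish. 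The folding argument has concrete obstructions. First, the fold is not injective at slice-$a_d$ vertices: the cross-edge $(w,a_d)(w,a_d+1)$ and the $G$-edge $(w,a_d-1)(w,a_d)$ are both incident to $(w,a_d)$ and fold to the same edge, so a star of $r$ helper edges at the centre $(w,a_d)$ certifying an addition in the $G^+$-process may fold to only $r-1$ distinct present edges in $G$, destroying the certificate. Second, a cross-edge may be added in the $G^+$-process via a star centred at its top endpoint, whose helper edges lie in $H_{a_d+1}$ (possibly in $F_2$); since you discard $F_2$ and do not fold top-slice edges into $F'$, the folded process has no certificate for the image edge, and you cannot fold $F_2$-edges in without ruining the count $|F'|\leq|F\setminus F_2|$. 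Third, and more fundamentally, it is not clear that the slice-wise inequality holds for an arbitrary weakly saturated $F$ at all: weak saturation numbers notoriously fail to be controllable by such direct restriction/projection arguments, which is precisely why the known lower bounds (including all of the paper's) go through linear algebra.

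Your ``fallback'' --- a Kalai-type rank argument --- is exactly what the paper does, but it is the entire content of the proof rather than a patch: one needs Lemma~\ref{linalg} together with an inductive construction (the paper's Lemma~\ref{theGridProp}, paralleling Lemma~\ref{theProp} for the cube) of vectors $f_e\in\mathbb{R}^w$ attached to the edges, built from a subspace $X\subseteq\mathbb{R}^{2d}$ of dimension $2d-r$ all of whose non-zero vectors have support of size at least $r+1$ (Lemma~\ref{support}), so that every copy of $S_{r+1}$ yields a dependence while the whole family spans $\mathbb{R}^w$; the induction must carry both the saturated graph and the vector family simultaneously, using a projection and a shear map $T_z$ to pass from $X$ to the subspaces needed for the two pieces $G$ and $H$. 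None of this construction appears in your proposal, so as written the lower bound --- the heart of the theorem --- is missing.
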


Observe that a lower bound on $m\left(\prod_{i=1}^d[a_i],r\right)$ follows from Theorem~\ref{genwsat} and \eqref{wsatperc}. To our knowledge, the combination of Theorem~\ref{genwsat} and \eqref{wsatperc} implies all of the known lower bounds on the cardinality of percolating sets in  multidimensional grids. In particular, it implies the (tight) lower bounds
\[\label{d=r}m\left([n]^d,d\right)\geq n^{d-1},\]
and
\begin{equation}\label{r=2}m\left(\prod_{i=1}^d[a_i],2\right) \geq \left\lceil\frac{\sum_{i=1}^d(a_i-1)}{2}\right\rceil + 1.\end{equation}
established in~\cite{Pete} and~\cite{conj}, respectively. 

Some motivation for Conjecture~\ref{theConj} stems from its connection to problems of a probabilistic nature. The most well studied problem in bootstrap percolation is to estimate the \emph{critical probability} at which a randomly generated set of vertices in a graph $G$ becomes likely to percolate. To be more precise, for $p\in [0,1]$, suppose that $A_0^p$ is a subset of $V(G)$ obtained by including each vertex randomly with probability $p$ independently of all other vertices and define
\[p_c(G,r):=\inf\left\{p: \mathbb{P}\left(A_0^p \text{ percolates}\right)\geq 1/2 \right\}.\]
The problem of estimating $p_c\left([n]^d,r\right)$ for fixed $d$ and $r$ and $n\to\infty$ was first considered by Aizenman and Lebowitz~\cite{Aiz} and subsequently studied in~\cite{3Dim,Cerf,Cerf2,sharper,Holroyd}. This rewarding line of research culminated in a paper of Balogh, Bollob{\'a}s, Duminil-Copin and Morris~\cite{sharp} in which $p_c\left([n]^d,r\right)$ is determined asymptotically for all fixed values of $d$ and $2\leq r\leq d$.

Comparably, far less is known about the critical probability when $d$ tends to infinity. In this regime, the main results are due to Balogh, Bollob\'{a}s and Morris in the case $r=d$~\cite{majority} and $r=2$~\cite{Highdim}. In the latter paper, the extremal bound \eqref{r=2} was applied to obtain precise asymptotics for $p_c\left([n]^d,2\right)$ whenever $d\gg \log(n)\geq 1$. In contrast, very little is known about the critical probability for fixed $r\geq3$ and $d\to\infty$. For example, the logarithm of $p_c\left(Q_d,3\right)$ is not even known to within a constant factor (see~\cite{Highdim}). As was mentioned in~\cite{LinAlg}, the original motivation behind Conjecture~\ref{theConj} was in its connections to the problem of estimating $p_c(Q_d,r)$.

The rest of the paper is organised as follows. In the next section, we outline our approach to proving Theorems~\ref{hyper} and~\ref{genwsat} and establish some preliminary lemmas. In Section~\ref{hyperSec}, we determine the value of $\wsat\left(Q_d,S_{r+1}\right)$, from which we derive Theorem~\ref{hyper} via \eqref{wsatperc}. We then determine $\wsat\left(\prod_{i=1}^d[a_i],S_{r+1}\right)$ in full generality in Section~\ref{generalSec} using similar ideas (which become somewhat more cumbersome in the general setting). In Section~\ref{upperSec}, we provide constructions of small percolating sets in the hypercube  and prove Theorem~\ref{r=3Upper}. Finally, we conclude the paper in Section~\ref{concl} by stating some open problems related to our work.

\section{Preliminaries}
\label{prelim}

We open this section by proving the following lemma, which is slightly stronger than \eqref{wsatperc} when applied to graphs with vertices of degree less than $r$ (including, for example, the graph $\prod_{i=1}^d[a_i]$ for $d<r$). Given a graph $F$ and a vertex $v\in V(F)$, the \emph{degree} of $v$ in $F$, denoted $d_F(v)$, is the number of vertices of $F$ which are adjacent to $v$. That is,  $d_F(v):=\left|N_F(v)\right|$. 

\begin{lem}
\label{edge}
Let $G$ be a graph and let $F$ be a spanning subgraph of $G$. Define
\[A_F:=\left\{v\in V(G): d_F(v)\geq\min\left\{r,d_G(v)\right\}\right\}.\]
If $A_F$ percolates with respect to the $r$-neighbour bootstrap percolation process on $G$, then $F$ is weakly $\left(G,S_{r+1}\right)$-saturated.  
\end{lem}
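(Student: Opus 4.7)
The plan is to exhibit an ordering of $E(G)\setminus E(F)$ under which each added edge completes a copy of $S_{r+1}$. The ordering is driven by the bootstrap process. Let $A_0\subseteq A_1\subseteq\dots\subseteq A_t=V(G)$ denote the bootstrap process from $A_0$, and for each $v\in V(G)$ write $\tau(v)=\min\{j:v\in A_j\}$. I would add edges in stages $j=0,1,\dots,t$: in stage $j$ add every edge $uv\in E(G)\setminus E(F)$ with $\max(\tau(u),\tau(v))=j$, and for $j\geq 1$ split this into two substages---first the ``cross'' edges with $v\in A_j\setminus A_{j-1}$ and $u\in A_{j-1}$, then the ``new'' edges lying inside $A_j\setminus A_{j-1}$. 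A straightforward induction on $j$ shows that just after stage $j-1$ is complete, the current graph consists of $F$ together with every edge of $G[A_{j-1}]$.

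Granting this induction, there are three verifications. At stage $0$, for $uv\in E(G[A_0])\setminus E(F)$ the hypothesis $d_F(u)\geq\min\{r,d_G(u)\}$ rules out the case $d_G(u)<r$ (otherwise $d_F(u)\geq d_G(u)$ would force $uv\in E(F)$), so $d_F(u)\geq r$ and the new edge completes $S_{r+1}$ centred at $u$. For a cross edge $vu$ at stage $j\geq 1$ with $u\in A_{j-1}$, I would use $u$ as the star centre: either $u\in A_0$ and $d_F(u)\geq r$ as above, or $u\in A_{j-1}\setminus A_0$ and the bootstrap rule at time $\tau(u)$ gives $u$ at least $r$ $G$-neighbours in $A_{\tau(u)-1}\subseteq A_{j-1}$, all of which are already joined to $u$ by the inductive hypothesis. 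For a new edge $vv'$ at stage $j\geq 1$, observe that by the end of the cross substage every $v\in A_j\setminus A_{j-1}$ is joined to all of its $G$-neighbours in $A_{j-1}$, of which there are at least $r$ by the definition of $A_j$; so $v$ has degree at least $r$ and can serve as star-centre for $vv'$.

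The principal obstacle, and the reason for splitting each stage into two substages, is that a freshly infected vertex $v\in A_j\setminus A_{j-1}$ need not yet have $r$ incident edges in the current graph, so $v$ cannot serve as the star-centre when its cross edges are first being added. The resolution is to let the older endpoint $u\in A_{j-1}$ play the role of centre during the cross substage---invoking either the hypothesis on $A_0$ or the bootstrap rule to certify that $u$ already has $r$ neighbours in the current graph---and only afterwards to process edges inside $A_j\setminus A_{j-1}$, by which time $v$ itself has accumulated enough cross edges to take over as centre.
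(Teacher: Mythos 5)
Your proof is correct and follows essentially the same route as the paper: edges are added in an order dictated by the bootstrap dynamics, with the hypothesis on $A_0$ certifying star centres inside $A_0$ and the bootstrap rule certifying centres at later-infected vertices. The only difference is bookkeeping --- the paper fixes a percolation-compatible vertex order and adds all edges incident to each vertex in turn (so each edge is added at its earlier endpoint's turn and centred there), whereas you add each edge at the infection time of its later endpoint with a cross/internal split; both orderings verify the same star condition.
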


\begin{proof}
Let $n:=|V(G)|$. By hypothesis, we can label the vertices of $G$ by $v_1,\dots,v_{n}$ in such a way that
\begin{itemize}
\item $\left\{v_1,\dots,v_{\left|A_F\right|}\right\}= A_F$, and
\item for $|A_F|+1\leq i\leq n$, the vertex $v_i$ has at least $r$ neighbours in $\left\{v_1,\dots,v_{i-1}\right\}$. 
\end{itemize}
Let us show that $F$ is weakly $(G,S_{r+1})$-saturated. We begin by adding to $F$ every edge of $E(G)\setminus E(F)$ which is incident to a vertex in $A_F$ (one edge at a time in an arbitrary order). For every vertex $v\in A_F$, we have that either
\begin{itemize}
\item there are at least $r$ edges of $F$ incident to $v$, or
\item every edge of $G$ incident with $v$ is already present in $F$.
\end{itemize}
Therefore, every edge of $E(G)\setminus E(F)$  incident to a vertex in $A_F$ completes a copy of $S_{r+1}$ when it is added. 

Now, for each $i=|A_F|+1,\dots, n$ in turn, we add every edge incident to $v_i$ which has not already been added (one edge at a time in an arbitrary order). Since $v_i$ has at least $r$ neighbours in $\left\{v_1,\dots,v_{i-1}\right\}$ and every edge incident to a vertex in $\left\{v_1,\dots,v_{i-1}\right\}$ is already present, we get that every such edge completes a copy of $S_{r+1}$ when it is added. The result follows. 
\end{proof}

For completeness, we will now deduce \eqref{wsatperc} from Lemma~\ref{edge}.

\begin{proof}[Proof of \eqref{wsatperc}]
Let $A_0$ be a set of cardinality $m(G,r)$ which percolates with respect to the $r$-neighbour bootstrap percolation process on $G$ and let $F$ be a spanning subgraph of $G$ such that  $d_F(v)\geq \min\left\{d_G(v),r\right\}$ for each $v\in A_0$. Note that this can be achieved by including at most $r$ edges per vertex of $A_0$ and so we can assume that $|E(F)|\leq r|A_0|=rm(G,r)$. By Lemma~\ref{edge}, $F$ is weakly $(G,S_{r+1})$-saturated and so
\[\wsat\left(G,S_{r+1}\right)\leq |E(F)|\leq rm(G,r)\]
as required. 
\end{proof}

We turn our attention to determining the weak saturation number of stars in hypercubes and, more generally, in multidimensional rectangular grids. To prove an upper bound on a weak saturation number, one only needs to construct a \emph{single} example of a weakly saturated graph of small size. Our main tool for proving the lower bound is the following linear algebraic lemma of Balogh, Bollob\'{a}s, Morris and Riordan~\cite{LinAlg}. A major advantage of this lemma is that it allows us to prove the lower bound in a constructive manner as well.  We include a proof for completeness.

\begin{lem}[Balogh, Bollob\'{a}s, Morris and Riordan~\cite{LinAlg}]
\label{linalg}
Let $G$ and $H$ be graphs and let $W$ be a vector space. Suppose that $\left\{f_e: e\in E(G)\right\}$ is a collection of vectors in $W$ such that for every copy $H'$ of $H$ in $G$ there exists non-zero coefficients $\left\{c_e:e\in E(H')\right\}$ such that $\sum_{e\in E(H')}c_ef_e = 0$. Then
\[\wsat(G,H) \geq \vdim(\vspan\left\{f_e:e\in E(G)\right\}).\]
\end{lem}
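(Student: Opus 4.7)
The plan is to run a straightforward induction along the weak saturation ordering, using the hypothesis to show that each newly added edge contributes a vector already lying in the span of the vectors associated to previously present edges. So no new vectors are ever added to the span.

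More precisely, let $F$ be a weakly $(G,H)$-saturated spanning subgraph of $G$ with $|E(F)| = \wsat(G,H)$. By definition, there is an ordering $e_1, e_2, \ldots, e_m$ of the edges of $E(G) \setminus E(F)$ such that, for each $i$, adding $e_i$ to $F \cup \{e_1, \ldots, e_{i-1}\}$ completes a copy $H_i$ of $H$. I would prove by induction on $i$ that $f_{e_i}$ lies in the span $V_0 := \operatorname{span}\{f_e : e \in E(F)\}$. The inductive hypothesis gives $f_{e_j} \in V_0$ for every $j < i$. Now apply the assumption to the copy $H_i$: there are nonzero coefficients $\{c_e : e \in E(H_i)\}$ with $\sum_{e \in E(H_i)} c_e f_e = 0$. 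Since $c_{e_i} \ne 0$, this can be rearranged to express $f_{e_i}$ as a linear combination of the vectors $\{f_e : e \in E(H_i) \setminus \{e_i\}\}$, all of which are associated to edges in $E(F) \cup \{e_1, \ldots, e_{i-1}\}$ and hence lie in $V_0$ by induction. Therefore $f_{e_i} \in V_0$.

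Once the induction is complete, every vector $f_e$ for $e \in E(G)$ belongs to $V_0$, so $\operatorname{span}\{f_e : e \in E(G)\} = V_0$, and hence
\[\dim\bigl(\operatorname{span}\{f_e : e \in E(G)\}\bigr) = \dim(V_0) \le |E(F)| = \wsat(G,H),\]
which is the desired inequality. There is no real obstacle here; the only substantive point is that the hypothesis guarantees $c_{e_i} \ne 0$ so that we can actually solve for $f_{e_i}$, which is precisely why the lemma requires \emph{nonzero} coefficients on every edge rather than merely a nontrivial linear dependence.
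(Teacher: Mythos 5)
Your proof is correct and follows essentially the same argument as the paper: order the missing edges by the saturation sequence and use the nonzero coefficient on the new edge to solve for $f_{e_i}$ in terms of vectors of previously present edges, so the span never grows beyond $\vspan\left\{f_e: e\in E(F)\right\}$. The only cosmetic difference is that you phrase this as an explicit induction into the fixed subspace $V_0$ while the paper records it as a chain of equal dimensions $\vdim\left(\vspan\left\{f_e: e\in E(F_i)\right\}\right)$, which is the same idea.
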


\begin{proof}
Let $F$ be a weakly $(G,H)$-saturated graph and define $m:=|E(G)\setminus E(F)|$. By definition of $F$, we can label the edges of $E(G)\setminus E(F)$ by $e_1,\dots,e_m$ in such a way that, for $1\leq i\leq m$, there is a copy $H_i$ of $H$ in $F_i:=F\cup\left\{e_1,\dots,e_i\right\}$ containing the edge $e_i$. By the hypothesis, we get that
\[f_{e_i}\in \vspan\left\{f_e: e\in E(H_i)\setminus\left\{e_i\right\}\right\}\subseteq \vspan\left\{f_e: e\in E(F_i)\setminus\left\{e_i\right\}\right\}\]
for all $i$. Therefore,
\[|E(F)|\geq \vdim\left(\vspan\left\{f_e: e\in E(F)\right\}\right) = \vdim\left(\vspan\left\{f_e: e\in E\left(F_1\right)\right\}\right)\]
\[=\dots =\vdim\left(\vspan\left\{f_e:e\in E\left(F_m\right)\right\}\right) = \vdim\left(\vspan\left\{f_e:e\in E(G)\right\}\right).\] 
The result follows. 
\end{proof}

Lemma~\ref{linalg} was proved in a more general form and applied to a percolation problem in multidimensional square grids in~\cite{LinAlg}. It was also used by Morrison, Noel and Scott~\cite{MNS} to determine $\wsat\left(Q_d,Q_m\right)$ for all $d\geq m\geq1$. We remark that the general idea of applying the notions of dependence and independence in weak saturation problems is also present in the works of Alon~\cite{Alon} and Kalai~\cite{Kalai2}, where techniques involving exterior algebra and matroid theory were used to prove a tight lower bound on $\wsat(K_n,K_k)$ conjectured by Bollob\'{a}s~\cite{Bollobook}. For a more recent application of exterior algebra and matroid theory to weak saturation problems, see~\cite{Oleg}.

\section{The Hypercube Case}
\label{hyperSec}

Our goal in this section is to prove Theorem~\ref{wsatcube}. This settles the case $a_1=\dots=a_d=2$ of Theorem~\ref{genwsat} and, as discussed earlier, implies Theorem~\ref{hyper} via \eqref{wsatperc}. First, we require some definitions. 

\begin{defn}
Given $k\geq 1$, an index $i\in [k]$ and  $x\in \mathbb{R}^k$, let $x_i$ denote the $i$th coordinate of $x$. The \emph{support} of $x$ is defined by $\supp(x):=\left\{i\in[k]: x_i\neq 0\right\}$. 
\end{defn}

\begin{defn}
The \emph{direction} of an edge $e=uv\in E\left(Q_d\right)$ is the unique index $i\in [d]$ such that $u_i\neq v_i$. Given a vertex $v\in V(Q_d)$, we define $e(v,i)$ to be the unique edge in direction $i$ that is incident to $v$. 
\end{defn}

Note that each edge of $Q_d$ receives two labels (one for each of its endpoints). Our approach will make use of the following simple linear algebraic fact.

\begin{lem}
\label{support}
Let $k\geq \ell\geq0$ be fixed. Then there exists a subspace $X$ of $\mathbb{R}^k$ of dimension $k-\ell$ such that $\left|\supp(x)\right|\geq \ell+1$ for every $x\in X\setminus \{0\}$. 
\end{lem}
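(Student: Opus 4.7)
The plan is to construct $X$ explicitly as the kernel of a carefully chosen $\ell \times k$ matrix whose columns are in ``general position,'' exploiting a Vandermonde-type structure. The trivial case $\ell = 0$ is handled by taking $X = \mathbb{R}^k$, so we may assume $\ell \geq 1$.

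Pick any $k$ distinct real numbers $t_1, \ldots, t_k$ and consider the $\ell \times k$ matrix $M$ defined by $M_{ji} = t_i^{j-1}$ for $1 \leq j \leq \ell$ and $1 \leq i \leq k$. Let $X := \ker(M) = \{x \in \mathbb{R}^k : Mx = 0\}$. I would then carry out two verifications. First, any $\ell$ columns of $M$ form an $\ell \times \ell$ Vandermonde matrix with distinct nodes, hence have nonzero determinant; this shows $M$ has rank $\ell$, so $\dim X = k - \ell$. Second, suppose for contradiction that some nonzero $x \in X$ satisfies $|\supp(x)| \leq \ell$. Writing $S := \supp(x)$, the equation $Mx = 0$ becomes a nontrivial linear dependence among the columns of $M$ indexed by $S$. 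Since $|S| \leq \ell$, I can extend $S$ to a set of exactly $\ell$ column indices, and the corresponding $\ell \times \ell$ submatrix is a Vandermonde matrix, hence invertible, contradicting the dependence.

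The argument is essentially a one-line application of the invertibility of Vandermonde matrices with distinct nodes, so I do not anticipate any real obstacle. The only thing to double-check is the boundary case $\ell = k$, where $X = \{0\}$ has dimension zero and the condition on supports is vacuous, which fits the statement as written.
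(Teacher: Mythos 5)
Your proof is correct, but it takes a genuinely different route from the paper. The paper proves Lemma~\ref{support} probabilistically: it takes $X$ to be the span of $k-\ell$ independent uniformly random unit vectors and observes that, almost surely, $X$ has dimension $k-\ell$ and meets each of the finitely many coordinate subspaces $W_T=\{x\in\mathbb{R}^k:\supp(x)\subseteq T\}$ with $|T|=\ell$ only in $\{0\}$. Your construction, the kernel of the $\ell\times k$ Vandermonde matrix $M_{ji}=t_i^{j-1}$ with distinct nodes (essentially the MDS property of a Reed--Solomon parity-check matrix), is deterministic, and both of your verifications are sound: rank $\ell$ follows from one invertible $\ell\times\ell$ Vandermonde minor, and a nonzero kernel vector of support at most $\ell$ would give a nontrivial dependence among at most $\ell$ columns, which sit inside an invertible $\ell\times\ell$ Vandermonde submatrix; the boundary cases $\ell=0$ and $\ell=k$ are also handled correctly. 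It is worth noting that your construction is even simpler than the explicit one the paper gives in its appendix: there $X$ is spanned by vectors $\alpha_i\oplus e_i$, and checking the support condition requires that projections of the $\alpha_i$ onto \emph{arbitrary} subsets $T\subseteq[\ell]$ of coordinates be in general position, i.e.\ invertibility of generalized Vandermonde minors with arbitrary row sets, which the paper establishes via a polynomial argument following Moshonkin. In your set-up only columns of $M$ are ever deleted, never rows, so the classical Vandermonde determinant suffices. The paper's probabilistic argument buys brevity and requires no algebraic identity at all; your argument buys an explicit subspace with an elementary one-line verification.
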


\begin{proof}
Define $X$ to be the span of a set $\left\{v_1,\dots,v_{k-\ell}\right\}$ of unit vectors of $\mathbb{R}^k$ chosen independently and uniformly at random with respect to the standard measure on the unit sphere $S^{k-1}$. Given a fixed subspace $W$ of $\mathbb{R}^k$ of dimension at most $\ell$ and $1\leq i\leq k-\ell$, the space
\[\vspan\left(W\cup\left\{v_1,\dots,v_{i-1}\right\}\right)\]
has dimension less than $k$. Thus, the unit sphere of this space has measure zero in $S^{k-1}$ and so, with probability one, $v_{i}\notin\vspan\left(W\cup\left\{v_1,\dots,v_{i-1}\right\}\right)$. It follows that $\vdim(X)=k-\ell$ and $X\cap W=\{0\}$ almost surely. In particular, if we let $T\subseteq [k]$ be a fixed set of cardinality $\ell$ and define
\[W_T:=\left\{x\in\mathbb{R}^k: \supp(x)\subseteq T\right\},\]
then $X\cap W_T=\{0\}$ almost surely. Since there are only finitely many sets $T\subseteq [k]$ of cardinality $\ell$, we can assume that $X$ is chosen so that $X\cap W_T=\{0\}$ for every such set. This completes the proof. 
\end{proof}

In the appendix, we provide an explicit (i.e. non-probabilistic) example of a vector space $X$ satisfying Lemma~\ref{support}. The following lemma highlights an important property of the space $X$. 

\begin{lem}
\label{Tsupp}
Let $k\geq\ell\geq0$ and let $X$ be a subspace of $\mathbb{R}^k$ of dimension $k-\ell$ such that $|\supp(x)| \ge \ell + 1$ for every $x \in X\backslash \{0\}$. For every set $T \subseteq [k]$ of cardinality $\ell+1$, there exists $x \in X$ with $\supp(x) = T$.
\end{lem}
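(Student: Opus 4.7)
The plan is to fix a set $T\subseteq[k]$ with $|T|=\ell+1$, produce a nonzero vector in $X$ whose support is contained in $T$, and then use the hypothesis to conclude that the support must in fact equal $T$. The key observation is a dimension count via the projection that forgets the coordinates in $T$.

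More precisely, I would consider the linear map $\pi: X \to \mathbb{R}^{[k]\setminus T}$ sending $x$ to its restriction to the coordinates outside $T$. The codomain has dimension $k-|T| = k-\ell-1$, while $\vdim(X)=k-\ell$ by hypothesis. By rank-nullity, $\vdim(\ker \pi) \geq (k-\ell) - (k-\ell-1) = 1$, so there exists a nonzero $x \in X$ with $x_i = 0$ for every $i \notin T$, equivalently $\supp(x) \subseteq T$.

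Now I apply the second property of $X$: since $x \neq 0$, the assumption gives $|\supp(x)| \geq \ell+1 = |T|$. Combined with $\supp(x) \subseteq T$, this forces $\supp(x) = T$, as required. There is no serious obstacle here; the entire argument is a one-line application of rank-nullity together with the hypothesized lower bound on supports, and the two bounds on $|\supp(x)|$ match up exactly because the codimension of $X$ equals $\ell$ and $|T|=\ell+1$.
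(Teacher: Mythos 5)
Your argument is correct and is essentially the paper's proof: the kernel of your projection restricted to $X$ is exactly $X\cap\{x\in\mathbb{R}^k:\supp(x)\subseteq T\}$, so your rank--nullity count is the same dimension argument the paper uses to find a nonzero vector supported in $T$, followed by the same equality conclusion from $|\supp(x)|\geq\ell+1$.
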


\begin{proof}
Let $T\subseteq[k]$ with $|T|=\ell+1$. Clearly, the space $\{x\in \mathbb{R}^k: \supp(x)\subseteq T\}$ has dimension $\ell+1$. Therefore, since $\vdim(X)=k-\ell$, there must be a non-zero vector $x\in X$ with $\supp(x)\subseteq T$. However, this inclusion must be equality since $|\supp(x)|\geq \ell+1$. 
\end{proof}

We are now in position to prove Theorem~\ref{wsatcube}. For notational convenience, we write 
\[w(r,d) :=r2^{r-1} + \sum_{j=1}^{r-1} \binom{d-j-1}{r-j}j 2^{j-1}.\]
We deduce Theorem~\ref{wsatcube} from the following lemma, after which we will prove the lemma itself.

\begin{lem}
\label{theProp}
Let $X$ be a subspace of $\mathbb{R}^d$ of dimension $d-r$ such that $|\supp(x)|\geq r+1$ for every $x\in X\setminus\{0\}$. Then there is a spanning subgraph $F$ of $Q_d$ and a collection $\left\{f_e: e\in E\left(Q_d\right)\right\}\subseteq \mathbb{R}^{w(r,d)}$ such that
\begin{enumerate}
\renewcommand{\theenumi}{Q\arabic{enumi}}
\renewcommand{\labelenumi}{(\theenumi)}
{\setlength\itemindent{4pt}\item \label{wsat} $F$ is weakly $\left(Q_d,S_{r+1}\right)$-saturated and $|E(F)| = w(r,d)$,}
{\setlength\itemindent{4pt}\item\label{satis} $\sum_{i=1}^d x_i f_{e(v,i)} = 0$ for every $v\in V\left(Q_d\right)$ and $x\in X$, and}
{\setlength\itemindent{4pt}\item\label{BigDim} $\vspan\left\{f_e: e\in E\left(Q_d\right)\right\}=\mathbb{R}^{w(r,d)}$. }
\end{enumerate}
\end{lem}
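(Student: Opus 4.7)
The plan is to construct the subgraph $F$ and the vectors $\{f_e\}_{e \in E(Q_d)}$ in $\mathbb{R}^w$ explicitly, then verify (Q1), (Q2), (Q3) in turn. Identifying $\mathbb{R}^w$ with $\mathbb{R}^{E(F)}$, I will set $f_e := \delta_e$ (the standard basis vector indexed by $e$) for each $e \in E(F)$, which makes (Q3) immediate. For $e \in E(Q_d) \setminus E(F)$, I will extend the definition of $f_e$ inductively along the weak-saturation order supplied by Lemma~\ref{edge}, converting each $S_{r+1}$-completion into a linear relation via Lemma~\ref{Tsupp}.

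\textbf{Construction of $F$ and verification of (Q1).} Guided by the form $w = r 2^{r-1} + \sum_{j=1}^{r-1} \binom{d-j-1}{r-j} j 2^{j-1}$, I will take $F$ to be an edge-disjoint union of a ``core'' copy of $Q_r$ (contributing the $r 2^{r-1}$ edges) and, for each $j \in \{1, \ldots, r-1\}$, a family of $\binom{d-j-1}{r-j}$ copies of $Q_j$ positioned so that each piece attaches to a lower-dimensional one along a common face while $d_F(v) \leq r$ for every vertex. With this layout, the set $A_0 := \{v \in V(Q_d) : d_F(v) \geq r\}$ percolates in $Q_d$: the core $Q_r$ is fully infected initially, and then a layered propagation argument shows each successive shell becomes infected by inheriting enough infected neighbours from previously infected pieces. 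Applying Lemma~\ref{edge} yields (Q1), and the edge count $|E(F)| = w$ is immediate from the decomposition.

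\textbf{Defining $\{f_e\}$ and proving (Q2).} Process the edges of $E(Q_d) \setminus E(F)$ in the weak-saturation order guaranteed by Lemma~\ref{edge}. At each step, an edge $e$ is added which completes an $S_{r+1}$ centred at some vertex $v$ together with $r$ previously-defined edges $e_1, \ldots, e_r$; let $T \subseteq [d]$ be the set of $r+1$ directions of $e, e_1, \ldots, e_r$. By Lemma~\ref{Tsupp}, fix $x^T \in X$ with $\supp(x^T) = T$ and define $f_e$ from the relation $\sum_{j \in T} x^T_j f_{e(v,j)} = 0$. The key point for (Q2) is that, by the defining property of $X$, no non-zero $x \in X$ has support contained in any $r$-subset $S \subseteq [d]$; the projection $X \to \mathbb{R}^{[d] \setminus S}$ is therefore a linear isomorphism, and so once the $r$ values $\{f_{e(v', j)} : j \in S\}$ are fixed at any vertex $v'$, the remaining $d - r$ values are uniquely determined by the full $X$-system. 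A short induction then shows that the $x^T$-relations introduced step-by-step at $v'$ span $X$, hence every $x \in X$ relation at $v'$ holds automatically. Property (Q3) is inherited from the standard-basis assignment on $E(F)$.

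\textbf{Main obstacle.} The principal difficulty lies in producing a construction of $F$ that simultaneously achieves the tight edge count $|E(F)| = w$, has $d_F(v) = r$ for exactly those $v \in A_0$ (and $d_F(v) < r$ otherwise), and supports the layered percolation argument. A secondary subtlety is that an edge $e = uv$ of $E(Q_d) \setminus E(F)$ may complete an $S_{r+1}$ at either endpoint, so the value $f_e$ could in principle depend on the centre chosen in the weak-saturation order; resolving this requires invoking the uniqueness of the $X$-extension from $F$-values, which guarantees that both choices yield the same vector.
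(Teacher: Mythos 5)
Your proposal has two genuine gaps, one in each half of the argument. For (Q\ref{wsat}), you never actually specify the graph $F$: ``positioned so that each piece attaches to a lower-dimensional one along a common face'' is not a construction, and the verification is deferred to an unspecified ``layered propagation argument''. Worse, the route through Lemma~\ref{edge} is problematic in principle: that lemma requires the set $A_0=\{v: d_F(v)\geq r\}$ to percolate, but in the natural arrangement matching your edge count (one copy of $Q_r$ and vertex-disjoint copies of $Q_j$ with $j\leq r-1$, which is exactly what the paper's recursion unrolls to) the only vertices of degree $\geq r$ are the $2^r$ vertices of the core $Q_r$, and $2^r\ll m(Q_d,r)$, so $A_0$ does not percolate and Lemma~\ref{edge} does not apply. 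Since $2|E(F)|=\sum_v d_F(v)\geq r|A_0|\geq r\,m(Q_d,r)\approx w$ for any $F$ to which Lemma~\ref{edge} applies, any such $F$ with exactly $w$ edges would have to concentrate essentially all of its degree on $A_0$; whether this is achievable is not addressed at all. The paper avoids this entirely: it builds $F$ recursively (the bottom copy of $Q_{d-1}$ carries a minimum weakly $(Q_{d-1},S_{r+1})$-saturated graph, the top copy a minimum weakly $(Q_{d-1},S_{r})$-saturated graph, and no edges in direction $d$) and verifies weak saturation directly by a three-stage edge-addition argument, not via Lemma~\ref{edge}; note also that the induction must simultaneously decrease $r$, a feature absent from your plan.

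For (Q\ref{satis}) the gap is more serious. Your scheme fixes $f_e=\delta_e$ on $E(F)$ and defines each remaining $f_e$ from a single relation $\sum_{j\in T}x^T_jf_{e(v,j)}=0$ at the centre $v$ of the star it completes. But (Q\ref{satis}) demands that \emph{every} relation $\sum_i x_if_{e(v,i)}=0$, $x\in X$, hold at \emph{every} vertex, including vertices that are never used as star centres and vertices whose incident non-$F$ edges all receive their values from stars centred at the opposite endpoints; your procedure imposes no constraints at such vertices, and nothing in the argument shows the relations there are satisfied. The appeal to ``uniqueness of the $X$-extension from $F$-values'' is circular: uniqueness tells you the values would be forced \emph{if} the full system at that vertex held, but it does not show that values propagated from other vertices satisfy it, nor that the standard-basis assignment on $E(F)$ is even compatible with the existence of such an extension. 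Likewise the claim that the relations introduced at a given vertex ``span $X$'' is unsubstantiated (many vertices acquire few or no relations). The paper's proof is precisely designed to get around this: the vectors $f_e$ are built by a direct-sum recursion ($f_e=f_e^0\oplus 0$ on the bottom copy, $f_e=f_{e'}^0\oplus f_e^1$ on the top copy, and the direction-$d$ vectors defined by the single vector $z$ via (\ref{xstar})), with the projected spaces $X_0=\pi(T_z(X))$ and $X_1=\pi(X)$ feeding the inductive hypothesis, so that (Q\ref{satis}) can be checked by an explicit computation at every vertex. As it stands, your proposal establishes neither (Q\ref{wsat}) nor (Q\ref{satis}).
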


\begin{proof}[Proof of Theorem~\ref{wsatcube}]
Clearly, the existence of a graph $F$ satisfying (\ref{wsat}) implies the upper bound $\wsat(Q_d,S_{r+1})\leq w(r,d)$. We show that the lower bound follows from (\ref{satis}), (\ref{BigDim}) and Lemma~\ref{linalg}. Note that the edge sets of copies of $S_{r+1}$ in $Q_d$ are precisely the sets of the form $\{e(v,i): i\in T\}$ where $v$ is a fixed vertex of $Q_d$ and $T$ is a subset of $[d]$ of cardinality $r+1$. By Lemma~\ref{Tsupp} we know that there exists some $x \in X$ with $\supp(x) = T$. By (\ref{satis}) we have 
\[\sum_{i=1}^d x_i f_{e(v,i)} = \sum_{i\in T}x_if_{e(v,i)} = 0\]
and so the hypotheses of Lemma~\ref{linalg} are satisfied. Therefore, 
\[\wsat\left(Q_d,S_{r+1}\right)\geq \vdim(\vspan\left\{f_e:e\in E(Q_d)\right\})\]
which equals $w(r,d)$ by (\ref{BigDim}). The result follows.
\end{proof}

\begin{proof}[Proof of Lemma~\ref{theProp}]
We proceed by induction on $d$. We begin by settling some easy boundary cases before explaining the inductive step. 

\begin{case}
$r=0$. 
\end{case}

In this case, $S_{r+1}$ is isomorphic to $K_2$. Also, $w(r,d)=0$ and $X=\mathbb{R}^d$. We let $F$ be a spanning subgraph of $Q_d$ with no edges and set $f_e:=0$ for every $e\in Q_d$. It is trivial to check that (\ref{wsat}), (\ref{satis}) and (\ref{BigDim}) are satisfied. 

\begin{case}
$d=r\geq 1$. 
\end{case}

In this case, $w(r,d)=d2^{d-1}= \left|E\left(Q_d\right)\right|$ and $X=\{0\}$. We define $F:=Q_d$ and let $\left\{f_e: e\in E\left(Q_d\right)\right\}$ be a basis for $\mathbb{R}^{w(r,d)}$. Clearly (\ref{wsat}), (\ref{satis}) and (\ref{BigDim}) are satisfied. 

\begin{case}
$d>r\geq1$. 
\end{case}

We begin by constructing $F$ in such a way that (\ref{wsat}) is satisfied. For $i\in \{0,1\}$, let $Q_{d-1}^i$ denote the subgraph of $Q_d$ induced by $\{0,1\}^{d-1}\times\{i\}$. Note that both $Q_{d-1}^0$ and $Q_{d-1}^1$ are isomorphic to $Q_{d-1}$. Let $F$ be a spanning subgraph of $Q_d$ such that
\begin{itemize}
\item the subgraph $F_0$ of $F$ induced by $V\left(Q_{d-1}^0\right)$ is a weakly $(Q_{d-1},S_{r+1})$-saturated graph of minimum size, 
\item the subgraph $F_1$ of $F$ induced by $V\left(Q_{d-1}^1\right)$ is a weakly $(Q_{d-1},S_{r})$-saturated graph of minimum size, and
\item $F$ contains no edge in direction $d$.
\end{itemize}
Figure~\ref{Q5} contains a specific instance of this construction. 

By the inductive hypothesis and our choice of $F_0$ and $F_1$, we know that $|E(F_0)|=w(r,d-1)$ and $|E(F_1)|=w(r-1,d-1)$. So, by construction of $F$ and Pascal's Formula for binomial coefficients, we have
\[|E(F)|=w(r,d-1)+w(r-1,d-1)\]
\[= \left[r2^{r-1} + \sum_{j=1}^{r-1}\binom{(d-1)-j-1}{r-j}j2^{j-1}\right] + \left[(r-1)2^{r-2} + \sum_{j=1}^{r-2}\binom{(d-1)-j-1}{(r-1)-j}j2^{j-1}\right]\]
\[=r2^{r-1} +\left(\sum_{j=1}^{r-2} \binom{d-j-1}{r-j} j2^{j-1}\right)  + (d-r-1)(r-1)2^{r-2} + (r-1)2^{r-2} = w(r,d).\]
Let us verify that $F$ is weakly $\left(Q_d,S_{r+1}\right)$-saturated. To see this, we add the edges of $E\left(Q_d\right)\setminus E(F)$ to $F$ in three stages. By construction, we can begin by adding all edges of $Q_{d-1}^0$ which are not present in $F_0$ in such a way that each edge completes a copy of $S_{r+1}$ in $Q_{d-1}^0$ when it is added. In the second stage, we add all edges of $Q_d$ in direction $d$ one by one in any order. Since every vertex of $Q_d$ has degree $d\geq r+1$ and every edge of $Q_{d-1}^0$ has already been added, we get that every edge added in this stage completes a copy of $S_{r+1}$ in $Q_d$. Finally, we add the edges of $Q_{d-1}^1$ which are not present in $F_1$ in such a way that each added edge completes a copy of $S_{r}$ in $Q_{d-1}^1$. Since the edges in direction $d$ have already been added, we see that every such edge completes a copy of $S_{r+1}$ in $Q_d$. Therefore, (\ref{wsat}) holds.

\begin{figure}[htbp]
\newcommand{\Depth}{1.6}
\newcommand{\Height}{1.6}
\newcommand{\Width}{1.6}
\newcommand{\Shift}{2.5*\Height}
\newcommand{\Sep}{0.17*\Shift}
\tikzstyle{loosely dashed}=          [dash pattern=on 5pt off 3pt]
\begin{center}
\begin{tikzpicture}

\coordinate (O1) at (0,0,0);
\coordinate (A1) at (0,\Width,0);
\coordinate (B1) at (0,\Width,\Height);
\coordinate (C1) at (0,0,\Height);
\coordinate (D1) at (\Depth,0,0);
\coordinate (E1) at (\Depth,\Width,0);
\coordinate (F1) at (\Depth,\Width,\Height);
\coordinate (G1) at (\Depth,0,\Height);

\draw[very thick] (O1) -- (C1) -- (G1) -- (D1) -- cycle;
\draw[very thick,dashed] (O1) -- (A1) -- (E1) -- (D1);
\draw[very thick,dashed] (A1) -- (B1) -- (C1);
\draw[very thick,dashed] (E1) -- (F1) -- (G1);
\draw[very thick]  (B1) -- (F1);

\draw (\Depth/2 +\Shift/2, \Width/2,\Height/2) node [rectangle, draw, thick, rounded corners, minimum height=6.2em,minimum width=18em]{};
\draw (\Depth/2+\Shift/2,0,\Height/2) node [label={[label distance=0.45cm]270:$F_{1}$}] {};

\coordinate (O2) at (0+\Shift,0,0);
\coordinate (A2) at (0+\Shift,\Width,0);
\coordinate (B2) at (0+\Shift,\Width,\Height);
\coordinate (C2) at (0+\Shift,0,\Height);
\coordinate (D2) at (\Depth+\Shift,0,0);
\coordinate (E2) at (\Depth+\Shift,\Width,0);
\coordinate (F2) at (\Depth+\Shift,\Width,\Height);
\coordinate (G2) at (\Depth+\Shift,0,\Height);

\draw[very thick,dashed] (O2) -- (C2) -- (G2) -- (D2) -- cycle;
\draw[very thick,dashed] (O2) -- (A2) -- (E2) -- (D2);
\draw[very thick,dashed] (A2) -- (B2) -- (C2);
\draw[very thick,dashed] (E2) -- (F2) -- (G2);
\draw[very thick]  (B2) -- (F2);

\coordinate (O3) at (0,-\Shift,0);
\coordinate (A3) at (0,\Width-\Shift,0);
\coordinate (B3) at (0,\Width-\Shift,\Height);
\coordinate (C3) at (0,-\Shift,\Height);
\coordinate (D3) at (\Depth,-\Shift,0);
\coordinate (E3) at (\Depth,\Width-\Shift,0);
\coordinate (F3) at (\Depth,\Width-\Shift,\Height);
\coordinate (G3) at (\Depth,-\Shift,\Height);

\draw[very thick] (O3) -- (C3) -- (G3) -- (D3) -- cycle;
\draw[very thick] (O3) -- (A3) -- (E3) -- (D3);
\draw[very thick] (A3) -- (B3) -- (C3);
\draw[very thick] (E3) -- (F3) -- (G3);
\draw[very thick]  (B3) -- (F3);

\draw (\Depth/2 +\Shift/2, \Width/2 - \Shift,\Height/2) node [rectangle, draw, thick, rounded corners, minimum height=6.2em,minimum width=18em]{};
\draw (\Depth/2+\Shift/2,-\Shift,\Height/2) node [label={[label distance=0.45cm]270:$F_{0}$}] {};

\coordinate (O4) at (0+\Shift,0-\Shift,0);
\coordinate (A4) at (0+\Shift,\Width-\Shift,0);
\coordinate (B4) at (0+\Shift,\Width-\Shift,\Height);
\coordinate (C4) at (0+\Shift,0-\Shift,\Height);
\coordinate (D4) at (\Depth+\Shift,0-\Shift,0);
\coordinate (E4) at (\Depth+\Shift,\Width-\Shift,0);
\coordinate (F4) at (\Depth+\Shift,\Width-\Shift,\Height);
\coordinate (G4) at (\Depth+\Shift,0-\Shift,\Height);

\draw[very thick] (O4) -- (C4) -- (G4) -- (D4) -- cycle;
\draw[very thick,dashed] (O4) -- (A4) -- (E4) -- (D4);
\draw[very thick,dashed] (A4) -- (B4) -- (C4);
\draw[very thick,dashed] (E4) -- (F4) -- (G4);
\draw[very thick]  (B4) -- (F4);

\draw[line width=2,loosely dashed] (\Depth +\Sep, \Width/2,\Height/2) -- (\Shift-\Sep, \Width/2,\Height/2);

\draw[line width=2,loosely dashed] (\Depth +\Sep, \Width/2 - \Shift,\Height/2) -- (\Shift-\Sep, \Width/2 - \Shift,\Height/2);

\draw[line width=2,loosely dashed] (\Depth/2,  -\Sep ,\Height/2) -- (\Depth/2, -\Shift+\Sep +\Width,\Height/2);

\draw[line width=2,loosely dashed] (\Shift + \Depth/2,  -\Sep ,\Height/2) -- (\Shift +\Depth/2, -\Shift+\Sep +\Width,\Height/2);

\end{tikzpicture}
\end{center}

\caption{A weakly $(Q_5,S_4)$-saturated graph $F$ constructed inductively from a weakly $(Q_4,S_4)$-saturated graph $F_0$ and a weakly $(Q_4,S_3)$-saturated graph $F_1$, each of which is also constructed inductively. }
\label{Q5}
\end{figure}
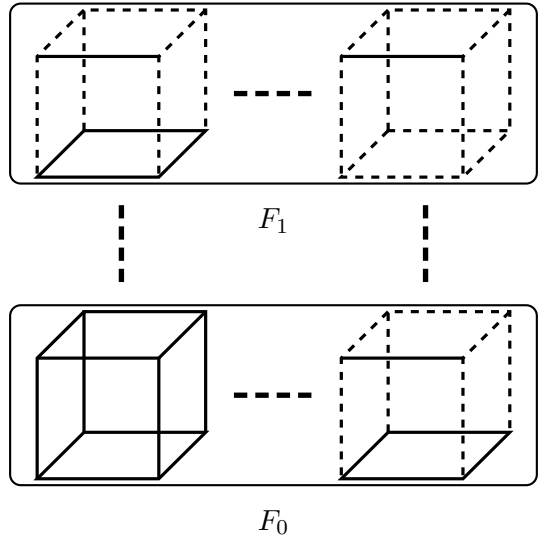

Thus, all that remains is to construct $\left\{f_e: e\in E\left(Q_d\right)\right\}$ in such a way that (\ref{satis}) and (\ref{BigDim}) are satisfied. Let $\pi:X\to \mathbb{R}^{d-1}$ be the standard projection defined by $\pi:\left(x_1,\dots,x_d\right)\mapsto \left(x_1,\dots,x_{d-1}\right)$. Let $z\in X$ be an arbitrary vector such that $d\in\supp\left(z\right)$ (such a vector exists by Lemma~\ref{Tsupp}) and let $T_{z}: X\to X$ be the linear map defined by
\[T_{z}(x):= x - \frac{x_d}{z_d}z\]
for $x\in X$. Define
\[X_0:= \pi\left(T_{z}(X)\right)\text { and}\]
\[X_1:= \pi(X).\]
What we will do next is apply the inductive hypothesis  to assign the edges of $Q_{d-1}^0$ and $Q_{d-1}^1$ to  vectors in $\mathbb{R}^{w(r,d-1)}$ and $\mathbb{R}^{w(r-1,d-1)}$, respectively, satisfying conditions analogous to (\ref{satis}) and (\ref{BigDim}) with the spaces $X_0$ and $X_1$ playing the role of $X$. These vectors will then be used to construct vectors for the edges of $Q_d$. In order to do this, we need to verify that $X_0$ and $X_1$ satisfy the hypotheses of the lemma. 

Clearly, $X_0$ and $X_1$ are both contained in $\mathbb{R}^{d-1}$, simply by definition of the projection $\pi$. The kernel of $T_z$ is precisely the span of $\{z\}$, and so $T_z(X)$ is a subspace of $X$ of dimension $d-r-1$. In particular, since $T_z(X)\subseteq X$, every non-zero  $x\in T_z(X)$ has $|\supp(x)|\geq r+1$. Also, the last coordinate of every $x\in T_z(X)$ is equal to zero, and so $\pi(x)$ has the same support as $x$. Thus, $X_0$ is a space of dimension $d-r-1$ such that every non-zero  $x\in X_0$ has $|\supp(x)|\geq r+1$. Now, for $X_1$, we note that the support of each non-zero vector of $X$ has size at least $r+1\geq2$ and so the kernel of $\pi$ is $\{0\}$. This implies that $X_1$ has dimension $d-r$. The fact that every non-zero $x\in X_1$ has $|\supp(x)|\geq r$ follows easily from the fact that every non-zero vector of $X$ is supported on a set of size at least $r+1$.

Therefore, by the inductive hypothesis, there exists $\left\{f_e^0: e\in E\left(Q_{d-1}^0\right)\right\}$ in $\mathbb{R}^{w(r,d-1)}$ and $\left\{f_e^1: e\in E\left(Q_{d-1}^1\right)\right\}$ in $\mathbb{R}^{w(r-1,d-1)}$ such that
\begin{enumerate}
\renewcommand{\theenumi}{Q2.\arabic{enumi}}
\renewcommand{\labelenumi}{(\theenumi)}
\addtocounter{enumi}{-1}
{\setlength\itemindent{10pt}\item\label{satisfied0} $\sum_{i=1}^{d-1} x_i f_{e(v,i)}^0 = 0$ for every $v\in V\left(Q_{d-1}^0\right)$ and $x\in X_0$,}
{\setlength\itemindent{10pt}\item\label{satisfied1} $\sum_{i=1}^{d-1} x_i f_{e(v,i)}^1 = 0$ for every $v\in V\left(Q_{d-1}^1\right)$ and $x\in X_1$,}
\renewcommand{\theenumi}{Q3.\arabic{enumi}}
\renewcommand{\labelenumi}{(\theenumi)}
\addtocounter{enumi}{-2}
{\setlength\itemindent{10pt}\item\label{bigDim0} $\vspan\left\{f_e^0: e\in E\left(Q_{d-1}^0\right)\right\}=\mathbb{R}^{w(r,d-1)}$, and}
{\setlength\itemindent{10pt}\item\label{bigDim1} $\vspan\left\{f_e^1: e\in E\left(Q_{d-1}^1\right)\right\}=\mathbb{R}^{w(r-1,d-1)}$. }
\end{enumerate}

We will define the vectors $\left\{f_e: e\in E\left(Q_d\right)\right\}\subseteq \mathbb{R}^{w(r,d-1)}\oplus\mathbb{R}^{w(r-1,d-1)} \simeq \mathbb{R}^{w(r,d)}$ satisfying (\ref{satis}) and (\ref{BigDim}) in three stages. First, if $e\in E\left(Q_{d-1}^0\right)$, then we set
\[f_e:=f_e^0\oplus 0.\]
Next, for each edge of the form $e = e(v,d)$ for $v\in V\left(Q_{d-1}^0\right)$, we let 
\begin{equation}
\label{xstar}
f_e:= -\frac{1}{z_d}\sum_{i=1}^{d-1}z_if_{e(v,i)}
\end{equation}
(recall the definition of $z$ above). Finally, if $e=uv\in E\left(Q_{d-1}^1\right)$, then we let $e' = u'v'$ where $u'$ and $v'$ are the unique neighbours of $u$ and $v$ in $V\left(Q_{d-1}^0\right)$ and define
\[f_e:=f_{e'}^0\oplus f_e^1.\]
Let us verify that (\ref{BigDim}) holds. The span of $\left\{f_e: e\in E(Q_d)\right\}$ contains the vectors
\[\left\{f_e^0\oplus 0: e\in E\left(Q_{d-1}^0\right)\right\}\]
as well as the vectors 
\[\left\{0\oplus f_e^1: e\in E\left(Q_{d-1}^1\right)\right\}\]
since, for every $e\in E\left(Q_{d-1}^1\right)$, we have $\left(f_{e'}^0\oplus f_e^1\right) - \left(f_{e'}^0\oplus 0\right) = 0\oplus f_e^1$. By (\ref{GbigDim0}) and (\ref{GbigDim1}), these sets span spaces of dimensions $w(r,d-1)$ and $w(r-1,d-1)$, respectively. Also, all of the vectors in the first space are clearly orthogonal to those in the second space. Thus, $\{f_e: e\in E(Q_d)\}$ spans a space of dimension $w(r,d-1)+w(r-1,d-1)=w(r,d)$ and so (\ref{BigDim}) holds.

Finally, we prove that (\ref{satis}) is satisfied. First, let $v\in V\left(Q_{d-1}^0\right)$ and let $x\in X$ be arbitrary. Define $x^\dagger:=T_{z}(x)$ and note that $d\notin \supp\left(x^\dagger\right)$ by definition of $T_z$ and that $x=x^\dagger + \frac{x_d}{z_d}z$. So, we have 
\[\sum_{i=1}^{d} x_i f_{e(v,i)} = \sum_{i=1}^{d-1} x^\dagger_i f_{e(v,i)} + \frac{x_d}{z_d}\sum_{i=1}^dz_if_{e(v,i)}.\]
The first sum on the right side is zero by (\ref{satisfied0}) since $\pi\left(x^\dagger\right)$ is contained in $X_0$. The second sum is equal to
\[\frac{x_d}{z_d}\sum_{i=1}^{d-1}z_if_{e(v,i)} + x_d f_{e(v,d)}\]
which is zero by \eqref{xstar}. 

Now, suppose that $v\in V\left(Q_{d-1}^1\right)$ and let $v'$ be the unique neighbour of $v$ in $V\left(Q_{d-1}^0\right)$. Given $x\in X$, we have 
\[\sum_{i=1}^{d} x_i f_{e(v,i)} = \sum_{i=1}^{d} x_i \left(f_{e(v',i)}^0\oplus f_{e(v,i)}^1\right) = \sum_{i=1}^{d} x_i \left(f_{e(v',i)}^0\oplus 0\right) + \sum_{i=1}^{d-1}x_i \left(0\oplus f_{e(v,i)}^1\right)\]
\[=\sum_{i=1}^{d}x_if_{e(v',i)} + \sum_{i=1}^{d-1}x_i \left(0\oplus f_{e(v,i)}^1\right).\]
The first sum on the right side is zero by the result of the previous paragraph (since $v'\in V\left(Q_{d-1}^0\right)$). The second sum is equal to zero by (\ref{satisfied1}) since $\pi(x)\in X_1$. Therefore, (\ref{satis}) holds. This completes the proof of the lemma.
\end{proof}

\section{General Grids}
\label{generalSec}

Our objective in this section is to determine the weak saturation number of $S_{r+1}$ in $\prod_{i=1}^d[a_i]$ in full generality. We express this weak saturation number in terms of the following recurrence relation. 

\begin{defn}
\label{recursion}
Let $d$ and $r$ be integers such that $0\leq r\leq 2d$ and let $a_1,\ldots,a_d \ge 2$. Define $w_r(a_1,\ldots,a_d)$ as follows:
\begin{itemize}[align=parleft,leftmargin=104pt,labelwidth=\widthof{(Hypercube Case 2)}]
\item[($r=0$ Case)] $w_r(a_1,\ldots,a_d) = 0  \text{ if } r=0$;
\item[($r=2d$ Case)] \( w_r(a_1,\ldots,a_d) = \displaystyle  \sum_{j=1}^d(a_j-1)\prod_{i\neq j}a_i  \text{ if }r=2d; \)
\item[(Hypercube Case 1)] \( w_r(a_1,\ldots,a_d) = \displaystyle d2^{d-1} \text{ if } {a_1=\dots=a_d=2\text{ and }d+1\leq r\leq 2d-1}; \)
\item[(Hypercube Case 2)] \( w_r(a_1,\ldots,a_d) = \displaystyle r2^{r-1} + \sum_{j=1}^{r-2} \binom{d-j-1}{r -j}j 2^{j-1} \text{ if } a_1=\dots=a_d=2\) and \(1\leq r\leq d; \text{ and } \)
\item[(Inductive Case)] \( w_r(a_1,\ldots,a_d) =\displaystyle {w_r(a_1,\dots,a_{i-1},a_i-1,a_{i+1},\dots a_d)} \\+ {w_{r-1}(a_1,\dots,a_{i-1},a_{i+1},\dots a_d)} \\+ {\sum_{\substack{S\subseteq[d]\setminus\{i\}\\ |S|\geq 2d-r}}2^{|S|}\prod_{j\notin S}(a_j-2)}\text{ if }1\leq r\leq 2d-1\text{ and }a_i\geq3.\)
\end{itemize}
\end{defn}

We prove the following.

\begin{thm}\label{wsatgrid}
For $0 \le r \le 2d$ and $a_1,\dots,a_d\geq2$, we have 
$$\wsat\left(\prod_{i=1}^d[a_i], S_{r+1}\right) = w_r(a_1,\ldots,a_d).$$
\end{thm}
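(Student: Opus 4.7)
The approach extends the inductive linear-algebra strategy from the proof of Lemma~\ref{theProp} to general grids, with an induction on $a_1 + \cdots + a_d$. The first four branches of Definition~\ref{recursion} constitute the base cases: $r = 0$ is trivial; $r = 2d$ forces every weakly saturated subgraph to be $G$ itself, because $S_{2d+1}$ has more leaves than the maximum $G$-degree, so no edge can ever be added during the process; the hypercube case $a_1 = \cdots = a_d = 2$ with $1 \leq r \leq d$ is Theorem~\ref{wsatcube}, and the range $d+1 \leq r \leq 2d-1$ for the hypercube again forces $F = Q_d$ by the same degree bound. For the inductive step, assume some $a_i \geq 3$ and $1 \leq r \leq 2d-1$, and decompose $G := \prod_{j=1}^d[a_j]$ into a sub-grid $G'$ (coordinate $i$ restricted to $[a_i-1]$), a slab $H \cong \prod_{j\neq i}[a_j]$ at $v_i = a_i$, and the bridge edges in direction $i$ between them. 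A direct $G$-degree calculation shows that a bridge $(u,v)$ with $u_i = a_i - 1$, $v_i = a_i$ can be added during bootstrap (with $u$ as centre, after $G'$ has been percolated) precisely when $v$ has fewer than $2d - r$ boundary coordinates in $[d]\setminus\{i\}$; the remaining bridges must already lie in $F$, and counting them gives exactly the extra term of Definition~\ref{recursion}.

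For the upper bound, I would take $F$ to be the union of an optimal weakly $(G', S_{r+1})$-saturated subgraph $F_0$ on $G'$, an optimal weakly $(H, S_r)$-saturated subgraph $F_1$ on $H$, and the forced bridges. Weak saturation follows in three stages: percolate $G'$ via $F_0$; add the remaining bridges, each completing an $S_{r+1}$ centred at its $G'$-endpoint; then percolate $H$ via $F_1$, so that each new $H$-edge completes an $S_r$ in $H$ which, combined with the bridge already present at its slab-centre, forms an $S_{r+1}$ in $G$. For the lower bound I would mirror this decomposition linear-algebraically, building vectors $\{f_e\}_{e \in E(G)} \subseteq \mathbb{R}^{w_r(a_1,\ldots,a_d)}$ so that every copy of $S_{r+1}$ in $G$ yields a dependence, and then applying Lemma~\ref{linalg}. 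Writing $\mathbb{R}^{w_r} = \mathbb{R}^{w_r(G')} \oplus \mathbb{R}^{w_{r-1}(H)} \oplus \mathbb{R}^M$, where $M$ equals the extra term, I would assign edges of $G'$ via the inductive construction in the first summand; edges of $H$ as a combination of the inductive $H$-assignment in the second summand with a shadow vector in the first (in analogy with the slab-edges of Lemma~\ref{theProp}); non-forced bridges via the analogue of equation~(\ref{xstar}) using a pivot vector supplied by Lemma~\ref{support}; and each forced bridge as a distinct standard basis vector of $\mathbb{R}^M$. A convenient simplification is that, for every forced bridge $(u,v)$, both endpoints satisfy $\deg_G \leq r$, so no copy of $S_{r+1}$ in $G$ contains a forced bridge and no dependence involving the $\mathbb{R}^M$ summand needs to be enforced; these vectors simply boost the dimension count.

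The main obstacle will be the more elaborate linear-algebraic bookkeeping once vertex degrees vary with position. In the hypercube every vertex has exactly one edge per direction, so the single subspace $X \subseteq \mathbb{R}^d$ encodes all dependences simultaneously via Lemma~\ref{Tsupp}. In the grid, interior vertices have two edges per direction while boundary vertices have fewer, so the analogue of Lemma~\ref{Tsupp} must supply a dependence among $\{f_e : e \ni v\}$ for every $(r+1)$-subset of edges at every $v$, with coefficients indexed by signed directions $(\pm, i)$ and with the pivot vector $z$ of Lemma~\ref{theProp} consistently propagated through the slab/sub-grid split. Verifying that the resulting span has dimension exactly $w_r(a_1, \ldots, a_d)$---in particular that the $\mathbb{R}^{w_r(G')}$, $\mathbb{R}^{w_{r-1}(H)}$, and $\mathbb{R}^M$ summands remain in direct sum after the $H$-edges shadow into the first summand and the non-forced bridges project into the first summand---is the technical crux.
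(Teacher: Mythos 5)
Your proposal is correct and follows essentially the same route as the paper's proof: the same sub-grid/slab/bridge decomposition with the forced bridges identified and counted exactly as in the extra term of Definition~\ref{recursion}, the same three-stage saturation argument for the upper bound, and the same $\mathbb{R}^{w_1}\oplus\mathbb{R}^{w_2}\oplus\mathbb{R}^{y}$ construction (shadow vectors for slab edges, a pivot vector for non-forced bridges, basis vectors for the forced bridges, which indeed lie in no copy of $S_{r+1}$) fed into Lemma~\ref{linalg}, as carried out in Lemma~\ref{theGridProp}. The one bookkeeping difference is that the paper labels the two possible edges per direction by parity rather than by sign, so that a bridge edge carries the \emph{same} label at both endpoints and the unused direction-$d$ label is absent at every slab vertex; this is precisely what lets the slab-vertex dependences reduce to sub-grid ones inside a single subspace $X\subseteq\mathbb{R}^{2d}$, and is the detail you should adopt when settling the ``propagation'' step you rightly flag as the technical crux.
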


Before presenting the proof, let us pause for a few remarks. Note that we do not need to provide a separate proof that the recurrence for $w_r(a_1,\dots,a_d)$ given above has a (unique) solution since this will be implied once we have shown  that $w_r(a_1,\dots,a_d)$ is equal to $\wsat\left(\prod_{i=1}^d[a_i], S_{r+1}\right)$. Also, in the case $d\geq r$, one can show that the expression in Theorem~\ref{genwsat} satisfies the recurrence in Definition~\ref{recursion} and so Theorem~\ref{genwsat} follows from Theorem~\ref{wsatgrid}. In what follows, let $a_1,\dots,a_{d}\geq2$ and define $G:=\prod_{i=1}^d[a_i]$. 

Note that a vertex $v$ of $G$ may be incident to either one or two edges in direction $i\in[d]$ depending on whether or not $v_i\in\left\{1,a_i\right\}$. With this in mind, we define a labelling of the edges of $G$. 

\begin{defn}
Say that an edge $e=uv\in E(G)$ in direction $i\in [d]$ is \emph{odd} if $\min\left\{u_i,v_i\right\}$ is odd and \emph{even} otherwise. We label $e$ by $e(v,2i-1)$ if $e$ is odd and $e(v,2i)$ if $e$ is even.
\end{defn}

Note that each edge of $G$ receives two labels, one for each of its endpoints. See Figure \ref{labels} for an explicit example of how we label the edges and define $I_v^G$.

\begin{defn}
For $v\in V(G)$, define $I^G_v:=\left\{j\in [2d]: e(v,j)\in E(G)\right\}$.
\end{defn}

\begin{figure}[htbp]
\centering 
\begin{tikzpicture}[main node/.style={circle,fill=white!20,draw}, scale=0.6, every node/.style={scale=0.6}]
\node [main node](a1) at (0,0) {$(1,1)$} ;
\node [main node](a2) at (0,4) {$(1,2)$} ;
\node [main node](a3) at (0,8) {$(1,3)$} ;
\node [main node](b1) at (4,0) {$(2,1)$} ;
\node [main node](b2) at (4,4) {$(2,2)$} ;
\node [main node](b3) at (4,8) {$(2,3)$};
\node [main node](c1) at (8,0) {$(3,1)$} ;
\node [main node](c2) at (8,4) {$(3,2)$} ;
\node [main node](c3) at (8,8) {$(3,3)$} ;
\draw 
(a1) edge[-]  node[midway,above,sloped]{{$e((1,2),1)$}} node[midway,below]{{$e((1,1),1)$}} (b1) 
(a1) edge[-] node[midway,below,sloped]{{$e((1,1),3)$}} node[midway,above,sloped]{{$e((1,2),3)$}}(a2) 
(b2) edge[-] node[midway,below,sloped, anchor = north]{{$e((2,1),3)$}} node[midway,above,sloped]{{$e((2,2),3)$}} (b1)

(b1) edge[-] node[midway,below]{{$e((2,1),2)$}} node[midway,above]{{$e((3,1),2)$}}(c1)
(c2) edge[-] node[midway,above,sloped]{{$e((3,2),3)$}} node[midway,below,sloped]{{$e((3,1),3)$}}(c1) 
(a2) edge[-] node[midway,below,sloped]{{$e((1,2),4)$}} node[midway,above,sloped]{{$e((1,3),4)$}} (a3) 
(b3) edge[-] node[midway,below,sloped, anchor = north]{{$e((2,2),4)$}} node[midway,above,sloped]{{$e((2,3),4)$}} (b2) 
(c3) edge[-] node[midway,below,sloped, anchor = north]{{$e((3,2),4)$}} node[midway,above,sloped]{{$e((3,3),4)$}}(c2) 
(a2) edge[-] node[midway,below]{{$e((1,2),1)$}} node[midway,above]{{$e((2,2),1)$}} (b2) 
(b2) edge[-] node[midway,below]{{$e((2,2),2)$}} node[midway,above]{{$e((3,2),2)$}} (c2) 
(a3) edge[-] node[midway,below]{{$e((1,3),1)$}} node[midway,above]{{$e((2,3),1)$}} (b3) 
(b3) edge[-] node[midway,below]{{$e((2,3),2)$}} node[midway,above]{{$e((3,3),2)$}}(c3) ;

\end{tikzpicture}
\caption{The edge labels where $G = \prod_{i=1}^2 [3]$. The lower label of each pair corresponds to the vertex on the left of the pair. We have $I_{(1,1)}^{G} = \{1,3\}$, $I_{(2,2)}^G = \{1,2,3,4\}$ and $I_{(3,3)}^G = \{2,4\}$, to give three examples of $I_v^G$.} 
\label{labels}
\end{figure}
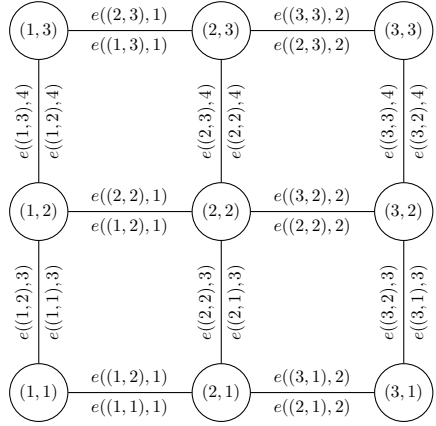

We are now in position to prove Theorem~\ref{wsatgrid}.  As with the proof of Theorem~\ref{wsatcube}, we state a lemma from which we deduce Theorem~\ref{wsatgrid}, and then we prove the lemma.

\begin{lem}
\label{theGridProp}
Let $X$ be a subspace of $\mathbb{R}^{2d}$ of dimension $2d-r$ such that $|\supp(x)|\geq r+1$ for every $x\in X\setminus\{0\}$. There is a spanning subgraph $F$ of $G$ and a collection $\left\{f_e: e\in E\left(G\right)\right\}\subseteq \mathbb{R}^{w_r(a_1,\dots,a_d)}$ such that
\begin{enumerate}
\renewcommand{\theenumi}{G\arabic{enumi}}
\renewcommand{\labelenumi}{(\theenumi)}
{\setlength\itemindent{4pt}\item \label{Gwsat} $F$ is weakly $\left(G,S_{r+1}\right)$-saturated and $|E(F)|=w_r(a_1,\dots,a_d)$,}
{\setlength\itemindent{4pt}\item \label{Gsatisfied} $\sum_{i=1}^{2d} x_i f_{e(v,i)} = 0$ for every $v\in V\left(G\right)$ and $x\in X$ such that $\supp(x)\subseteq I^G_v$, and}
{\setlength\itemindent{4pt}\item \label{GbigDim} $\vspan\left\{f_e: e\in E\left(G\right)\right\}=\mathbb{R}^{w_r(a_1,\dots,a_d)}$.}
\end{enumerate}
\end{lem}

\begin{proof}[Proof of Theorem~\ref{wsatgrid}.]
First observe that the existence of a graph $F$ satisfying (\ref{Gwsat}) implies $\wsat(G, S_{r+1}) \le w_r(a_1,\dots,a_d)$. To obtain a matching lower bound, we apply Lemma~\ref{linalg} as we did in the hypercube case. The edge sets of copies of $S_{r+1}$ in $G$ are the sets of the form $\{e(v,i): i \in T\}$, where $v \in V(G)$ and $T$ is a subset of $I^G_v$ of cardinality $r+1$. By applying Lemma~\ref{Tsupp} together with (\ref{Gsatisfied}), we see that the conditions of Lemma~\ref{linalg} are satisfied. Thus by (\ref{GbigDim}), $\wsat(G,S_{r+1}) \geq w_r(a_1,\dots,a_d)$.
\end{proof}

We now prove Lemma~\ref{theGridProp} in a similar fashion to the proof of Lemma~\ref{theProp}. We remark that the cases in the proof of this lemma correspond precisely to the cases of Definition~\ref{recursion}. 

\begin{proof}[Proof of Lemma~\ref{theGridProp}]
We proceed by induction on $|V(G)|$. We begin with the boundary cases.

\begin{case3}
$r = 0$.
\end{case3} 

In this case, $S_{r+1}$ is isomorphic to $K_2$. Also, $w_r(a_1,\dots,a_d)=0$ and $X=\mathbb{R}^{2d}$. We let $F$ be a spanning subgraph of $G$ with no edges and set $f_e:=0$ for every $e\in Q_d$. Properties (\ref{Gwsat}), (\ref{Gsatisfied}) and (\ref{GbigDim}) are satisfied trivially. 

\begin{case3}
$r=2d \geq 2$.
\end{case3} 

In this case, $w_r(a_1,\dots,a_d)=|E(G)|$ and $X=\{0\}$. We define $F:=G$ and let $\{f_e:e\in E(G)\}$ be a basis for $\mathbb{R}^{w_r(a_1,\dots,a_d)}$. Clearly (\ref{Gwsat}), (\ref{Gsatisfied}) and (\ref{GbigDim}) are satisfied.

\begin{case3}
$a_1 = \ldots = a_d=2$ and $d+1\leq r\leq 2d-1$.
\end{case3}

In this case, $G$ is isomorphic to $Q_d$ and we again have $w_r(a_1,\dots,a_d)=|E(G)|$. We define $F:=G$ and let $\{f_e:e\in E(G)\}$ be a basis for $\mathbb{R}^{w_r(a_1,\dots,a_d)}$. Clearly (\ref{Gwsat}), (\ref{Gsatisfied}) and (\ref{GbigDim}) are satisfied.

\begin{case3}
$a_1 = \ldots = a_d=2$ and $1\leq r\leq d$.
\end{case3} 

We again have that $G$ is isomorphic to $Q_d$. Also, note that every edge of $G$ is odd. We let $X'$ be the subspace of $X$ consisting of all vectors $x$ of $X$ such that every element of $\supp(x)$ is odd. 

We claim that $X'$ has dimension $d-r$. First, $X'$ is contained in the vector space consisting of all vectors in $\mathbb{R}^{2d}$ whose support contains only odd numbers, which is of course isomorphic to $\mathbb{R}^d$. Also, the support of every non-zero vector in $X'$ has size at least $r+1$ which, by Lemma~\ref{Tsupp}, implies that $X'$ has dimension at most $d-r$. Applying  Lemma~\ref{Tsupp} to $X$ we see that, for every odd number $j$ such that $2r+1\leq j\leq 2d-1$ there exists a vector $x_j\in X$ with $\supp\left(x_j\right) = \{1,3,\dots, 2r-1,j\}$. These vectors are linearly independent and contained in $X'$, and so $X'$ has dimension exactly $d-r$. 

Now, since $X'$ has dimension $d-r$, every $x\in X'$ has $|\supp(x)|\geq r+1$ and every edge in $G$ is odd,  we get that the graph $F$ and the vectors $\{f_e: e\in E(Q_d)\}$ exist by Lemma~\ref{theProp}. 

\begin{case3}
$1\leq r\leq 2d-1$ and $a_i\geq3$ for some $i\in [d]$. 
\end{case3} 

Without loss of generality, assume that $a_d\geq3$. Define
\[G_1:=\prod_{i=1}^{d-1}[a_i]\times [a_d-1],\text{ and}\]
\[G_2:=G\setminus G_1.\] 
Observe that every vertex of $G_2$ has a unique neighbour in $V(G_1)$. The edges with one endpoint in $G_1$ and the other in $G_2$ will play a particular role in the proof. We define
$$\tau := \begin{cases}      
    2d-1 & a_d - 1 \text{ is odd},\\
    2d & a_d - 1 \text{ is even},
\end{cases}$$
and we write $\bar{\tau}$ for the unique element of $\{2d-1,2d\}\backslash \{\tau\}$. Observe that for $v \in V(G_2)$, we have that $\bar{\tau} \notin I^G_v$, and that $I^{G_2}_v = I^G_v\setminus\{\tau\}$. On the other hand, if $v\in V(G_1)$, then
\[I_v^{G_1} = \begin{cases}	I_v^G\setminus \{\tau\} & \text{if }v_d= a_d-1,\\
										I_v^G 	& \text{otherwise}.\end{cases}\]
									
Before moving on, we need to make yet another definition. Define
\[Y := \left\{v \in V(G_1): v_d = a_d -1 \text{ and } d_{G_1}(v) < r\right\}.\]
Let us count the elements of $Y$. Every vertex of $G_1$ has either one or two neighbours in each direction $i\in [d]$. Thus, the degree of a vertex $v$ in $G_1$ is equal to $2d$ minus the number of directions in which $v$ has only one neighbour. So, if $d_{G_1}(v) <r$, then there must be at least $2d-r+1$ directions in which $v$ has only one neighbour. Observe that $v$ has a unique neighbour in some direction $i\in [d]$ if and only if $v_i\in\{1,a_i\}$. So, $|Y|$ is equal to the number of ways to choose a set $S\subseteq [d-1]$ of size $2d-r$ and to choose a vector $v$ of $G_1$ such that $v_i\in\{1,a_i\}$ for $i\in S$, $v_i\in \{2,\dots, a_i-1\}$ for $i\in [d-1]\setminus S$ and $v_d=a_d-1$. Thus, 
\begin{equation}\label{yCard}|Y|= {\sum_{\substack{S\subseteq[d-1]\\ |S|\geq 2d-r}}2^{|S|}\prod_{j\notin S}(a_j-2)}.\end{equation}
For brevity we write $y:=|Y|$ and
\[w_1:=w_r(a_1,\dots,a_{d-1},a_d - 1),\]
\[w_2:=w_{r-1}(a_1,\dots,a_{d-1}).\]
Note that, by induction, $w_1=\wsat\left(G_1,S_{r+1}\right)$ and $w_2=\wsat\left(G_2,S_r\right)$.  Next, we construct a graph $F$ satisfying (\ref{Gwsat}). Define $F$ to be a spanning subgraph of $G$ such that

\begin{itemize}
\item the subgraph $F_1$ of $F$ induced by $V(G_1)$ is a weakly $(G_1,S_{r+1})$-saturated graph of minimum size,
\item the subgraph $F_2$ of $F$ induced by $V(G_2)$ is a weakly $(G_2,S_{r})$-saturated graph of minimum size, and
\item an edge $e$ from $V(G_1)$ to $V(G_2)$ is contained in $F$ if and only if $e$ is of the form $e(v,\tau)$ for $v\in Y$. 
\end{itemize}

By \eqref{yCard} and Definition~\ref{recursion}, we see that 
\[|E(F)| = w_1+w_2+y = w_r(a_1,\dots,a_d),\]
as required.  To see that $F$ is weakly $(G,S_{r+1})$-saturated, we add the edges of $E(G)\backslash E(F)$ to $F$ in three stages. First, by definition of $F_1$, we can add the edges that are not present in $E(F_1)$ in such a way that every added edge completes a copy of $S_{r+1}$ in $G_1$. Next, we can add the edges of the form $e(v,\tau)$, where $v\notin Y$ and $v_d = a_d -1$, in any order. By definition of $Y$, we see that every such $v$ has at least $r$ neighbours in $G_1$. As every edge in $E(G_1)$ has already been added, the addition of $e(v,\tau)$ completes a copy of $S_{r+1}$ in $G$. Finally, we add the edges of $G_2$ that are not present in $F_2$ in such a way that each added edge completes a copy of $S_r$ in $G_2$. Every such edge completes a copy of $S_{r+1}$ in $G$ since every vertex in $G_2$ has a neighbour in $G_1$ and every edge between $G_1$ and $G_2$ is already present. Thus, (\ref{Gwsat}) holds.

It remains to find a collection $\{f_e: e \in E(G)\}$ satisfying (\ref{Gsatisfied}) and (\ref{GbigDim}). Let $\pi:X \rightarrow \mathbb{R}^{2d-2}$ be the projection defined by $\pi:(x_1, \ldots, x_{2d}) \mapsto (x_1,\ldots, x_{2d -2})$. Let $z$ be a fixed vector of $X$ such that $\bar{\tau}\in \supp\left(z\right)$ and define $T_{z}:X\to X$ by
\[T_{z}(x):= x-\frac{x_{\bar{\tau}}}{z_{\bar{\tau}}}z.\]
Define $X_1:=X$ and $X_2:= \pi\left(T_{z}(X)\right)$. 

Clearly $X_2$ is a subspace of $\mathbb{R}^{2(d-1)}$. In order to apply the inductive hypothesis on $X_2$ and $G_2$, we need that $X_2$ has dimension $2d-r-1 = 2(d-1) - (r-1)$ and that every $x\in X_2$ has $|\supp(x)|\geq r$. The kernel of $T_z$ is precisely the span of $\{z\}$, and so $T_z(X)$ is a subspace of $X$ of dimension $2d-r-1$. In particular, since $T_z(X)\subseteq X$, every non-zero $x\in T_z(X)$ has $|\supp(x)|\geq r+1$. By definition of $T_z$, every $x\in T_z(X)$ satisfies $x_{\bar{\tau}}=0$ and so $\left|\supp(x)\cap \{2d-1,2d\}\right|\leq 1$. In particular, since $r+1\geq2$, no non-zero vector of $T_z(X)$ is mapped by $\pi$ to the zero vector, and so we get that $X_2$ has dimension $2d-r-1$. Applying the fact that every $x\in T_z(X)$ satisfies $\left|\supp(x)\cap \{2d-1,2d\}\right|\leq 1$ once again, we get that the support of every vector in $X_2$ has size at least $r$.

Now, by applying the inductive hypothesis to both $G_1$ and $G_2$, we can find collections $\{f^1_e: e \in E(G_1)\}$ in $\mathbb{R}^{w_1}$ and $\{f^2_e: e \in E(G_2)\}$ in $\mathbb{R}^{w_2}$ such that
\begin{enumerate}
\renewcommand{\theenumi}{G2.\arabic{enumi}}
\renewcommand{\labelenumi}{(\theenumi)}
{\setlength\itemindent{10pt}\item\label{Gsatisfied0} $\sum_{i=1}^{2d} x_i f_{e(v,i)}^1 = 0$ for every $v\in V\left(G_1\right)$ and $x\in X_1$ with $\supp(x) \subseteq I^{G_1}_v$,}
{\setlength\itemindent{10pt}\item\label{Gsatisfied1} $\sum_{i=1}^{2d-2} x_i f_{e(v,i)}^2 = 0$ for every $v\in V\left(G_2\right)$ and $x\in X_2$ with $\supp(x) \subseteq I^{G_2}_v$,}
\renewcommand{\theenumi}{G3.\arabic{enumi}}
\renewcommand{\labelenumi}{(\theenumi)}
\addtocounter{enumi}{-2}
{\setlength\itemindent{10pt}\item\label{GbigDim0} $\vspan\left\{f_e^1: e\in E\left(G_1\right)\right\}=\mathbb{R}^{w_1}$, and}
{\setlength\itemindent{10pt}\item\label{GbigDim1} $\vspan\left\{f_e^2: e\in E\left(G_2\right)\right\}=\mathbb{R}^{w_2}$. }
\end{enumerate}

Using this, we will now construct a collection $\{f_e: e \in E(G)\} \subseteq \mathbb{R}^{w_1} \oplus \mathbb{R}^{w_2} \oplus \mathbb{R}^{y} \simeq \mathbb{R}^{w_r\left(a_1,\dots,a_d\right)}$ in four steps. First, for $e \in E(G_1)$, we define
$$f_e := f_e^1 \oplus 0 \oplus 0.$$ 
Let $\{f_y^3: y \in Y\}$ be a basis of $\mathbb{R}^{y}$. Next, we consider edges $e = uv$, where $v \in V(G_1)$, and $u \in V(G_2)$. If $v$ is in $Y$, then we let
$$f_e := 0 \oplus 0 \oplus f^3_v.$$ 
If $v$ is not in $Y$, then let $z^v\in X$ be a vector such that $\supp(z^v)\subseteq I_v^G$ and $\tau\in \supp\left(z^v\right)$, which exists by Lemma~\ref{support}. Define
\begin{equation}
\label{xstar2}
f_e := -\frac{1}{z^v_\tau}\sum_{i\in [2d]\setminus\{\tau\}}z^v_i f_{e(v,i)}.
\end{equation}
Finally if $e =uv \in E(G_2)$, then let $e' = u'v'$ where $u'v'$ are the unique neighbours of $u$ and $v$ in $V(G_1)$ and define
$$f_e := f^1_{e'}\oplus f^2_{e} \oplus 0.$$
It is clear from (\ref{GbigDim0}), (\ref{GbigDim1}) and the construction of $f_e$, that the dimension of $\vspan \{f_e:e \in E(G)\}$ is $w_1 + w_2 + y = w_r(a_1,\dots,a_d)$. Thus (\ref{GbigDim}) is satisfied.

It remains to show that (\ref{Gsatisfied}) holds. Firstly, suppose $v \in V(G_1)$ and let $x \in X$ be such that $\supp(x) \subseteq I^G_v$. If $v_d < a_d - 1$, then  $\sum_{i=1}^{2d} x_i f_{e(v,i)} = 0$ by (\ref{Gsatisfied0}). If $v_d = a_d - 1$ and $v \in Y$, then, by definition of $Y$, we have $|I^G_v| \le r$. Thus, by our hypothesis on $X$, the only vector $x\in X$ with $\supp(x)\subseteq I^G_v$ is the zero vector and so we are done. Now suppose that $v\notin Y$ and that $v_d = a_d -1$. Define 
\[x^{\dagger} := x - \frac{x_\tau}{z^v_\tau}z^v\]
and note that $x=x^\dagger + \frac{x_\tau}{z^v_\tau}z^v$. We have,
\begin{equation}\label{G2sat}
\sum_{i=1}^{2d}x_if_{e(v,i)} = \sum_{i=1}^{2d} x^{\dagger}_i f_{e(v,i)} + \frac{x_{\tau}}{z^v_{\tau}}\sum_{i=1}^{2d}z_i^vf_{e(v,i)}.
\end{equation}
Note that $\tau \notin \supp(x^{\dagger})$ and thus $\supp(x^\dagger) \subseteq I^{G}_v\setminus\{\tau\} = I^{G_1}_v$. Thus, the first sum on the right side of \eqref{G2sat} is equal to
\[\sum_{i\in [2d]\setminus\{\tau\}} x^{\dagger}_i \left(f_{e(v,i)}^1 \oplus 0\oplus 0\right)\]
which is zero by (\ref{Gsatisfied0}). The second sum is equal to 
\[\frac{x_\tau}{z_\tau^v}\sum_{i\in [2d]\setminus\{\tau\}} z_i^v f_{e(v,i)} + x_\tau f_{e(v,\tau)}\]
which is zero by \eqref{xstar2}. 

Finally, consider $v \in V(G_2)$. Let $v'$ be the unique neighbour of $v$ in $V(G_2)$. Given $x \in X$, with $\supp(x) \subseteq I^G_{v}$ we have
$$\sum_{i=1}^{2d} x_i f_{e(v,i)} = \sum_{i=1}^{2d}x_i f_{e(v',i)} + \sum_{i=1}^{2d - 2}x_i \left(0 \oplus f^2_{e(v,i)} \oplus 0\right).$$
Since $I_v^{G}\subseteq I_{v'}^{G}$ we have that $\supp(x)\subseteq I_{v'}^G$ and so the first sum on the right side is equal to zero by the result of the previous paragraph. The second sum on the right side is zero by (\ref{Gsatisfied1}), which is applicable as $\bar{\tau} \notin I_v^G\supseteq \supp(x)$, and so $x \in T_{z}(X)$. This completes the proof of the lemma.
\end{proof}

\section{Upper Bound Constructions}
\label{upperSec}

In this section, we prove a recursive upper bound on $m(Q_d,r)$ for general $d\geq r\geq 1$ and then apply it to obtain an exact expression for $m(Q_d,3)$. 

\begin{lem}
\label{generalRUpper}
For $d\geq r\geq1$,
\[m\left(Q_d,r\right) \leq  m\left(Q_{d-r},r\right) + (r-1)m\left(Q_{d-r},r-1\right) + \sum_{j=1}^{\left\lceil r/2\right\rceil-1}\binom{r}{2j+1}m\left(Q_{d-r},r-2j\right).\]
\end{lem}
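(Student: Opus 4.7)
The plan is to exhibit an explicit set $A_0 \subseteq V(Q_d)$ of the claimed cardinality and then argue that it percolates. Identify $Q_d \cong Q_r \square Q_{d-r}$, so every vertex is a pair $(u, w)$ with $u \in \{0,1\}^r$ and $w \in \{0,1\}^{d-r}$, and write $C_u := \{u\} \times \{0,1\}^{d-r}$ for the slice at $u$, which is isomorphic to $Q_{d-r}$. The $d$ neighbours of $(u, w)$ in $Q_d$ split into the $r$ ``external'' vertices $(u \oplus e_i, w)$ for $i \in [r]$, and the $d - r$ ``internal'' vertices lying in $C_u$. Fix a distinguished weight-one vertex $u^* := e_1$, and, for each $k \in \{1, \ldots, r\}$, choose a minimum $k$-neighbour percolating set $B_{(k)} \subseteq \{0,1\}^{d-r}$ of cardinality $m(Q_{d-r}, k)$.

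I define $A_0$ by seeding each odd-weight slice of $Q_r$ as follows: place $\{u^*\} \times B_{(r)}$ in $C_{u^*}$; place $\{e_i\} \times B_{(r-1)}$ in each of the other $r-1$ weight-one slices $C_{e_i}$ with $i \ge 2$; and, for each $j \ge 1$ and each $u \in \{0,1\}^r$ of weight $2j+1$, place $\{u\} \times B_{(r-2j)}$ in $C_u$. The crucial design choice is to use the \emph{same} set $B_{(r-2j)}$ in every seeded slice of a given weight $2j+1$, so that a single vertex $w \in B_{(r-2j)}$ is seeded in all of them at once. Summing cardinalities gives
\[|A_0| = m(Q_{d-r}, r) + (r-1)\, m(Q_{d-r}, r-1) + \sum_{j \ge 1} \binom{r}{2j+1}\, m(Q_{d-r}, r-2j),\]
matching the right-hand side of the lemma.

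To verify that $A_0$ percolates, I would argue by induction on the Hamming weight $k$ that the slices fill up in the order $C_{u^*}$, then $C_0$, then the remaining weight-one slices, then all weight-two slices, and so on up to weight $r$. The slice $C_{u^*}$ fills up because, with no external neighbour initially infected, the $r$-neighbour bootstrap on $Q_d$ restricted to $C_{u^*}$ coincides with the $r$-neighbour bootstrap on $Q_{d-r}$, which $B_{(r)}$ percolates by construction. Next, $C_0$ fills up: for each $w \in B_{(r-1)}$, the vertex $(0, w)$ has $r$ external infected neighbours (one from the now fully infected $C_{u^*}$, and $r-1$ from the other weight-one slices, each seeded at $w$), so $(0, w)$ is infected immediately, and the ensuing internal $(r-1)$-neighbour bootstrap on $C_0$ starting from the triggered set percolates the slice. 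For each subsequent weight $k$, a similar pattern applies: each $(u, w)$ with $|u| = k$ receives $k$ external infected neighbours from the fully infected weight-$(k-1)$ slices, and these combine with either the internal seeds of $C_u$ itself (when $k$ is odd) or triggered infections from the common seeds of the adjacent weight-$(k+1)$ slices (when $k$ is even) to produce $r$ external infections at the ``triggered'' vertices $w \in B_{(\cdot)}$; the remaining vertices of $C_u$ are then filled via an internal bootstrap of effective threshold at most $r - k$ started from these triggered seeds.

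The main obstacle is to carry out this phase-by-phase verification carefully, handling the alternation between odd and even weights and the edge cases $k \in \{0, 1, r\}$. Two key facts underpin the argument: a minimum $(r-2j)$-percolating set of $Q_{d-r}$ remains a $t$-percolating set for every $t \le r - 2j$, so that the internal bootstraps at smaller effective thresholds still succeed; and the uniform choice of the same set $B_{(r-2j)}$ in every weight-$(2j+1)$ slice guarantees the triggering step infects a common vertex of $Q_{d-r}$ across all those slices at the same moment. With these ingredients in place, the induction goes through and the stated upper bound follows.
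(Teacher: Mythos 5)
Your construction and verification strategy coincide with the paper's own proof: the same decomposition $Q_d \cong Q_r \square Q_{d-r}$, the same seeding ($B_r$ on one weight-one slice, $B_{r-1}$ on the other $r-1$, and a common copy of $B_{r-2j}$ on every weight-$(2j+1)$ slice), and the same level-by-level infection argument using external neighbours from the level below together with the uniformly placed seeds in the level above. The proposal is correct and essentially identical to the paper's argument, including the key monotonicity-in-threshold observation and the even-$r$ top level.
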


\begin{proof}
Let $d\geq r$ be fixed positive integers. For $1\leq t\leq r$, let $B_t$ be a subset of $V\left(Q_{d-r}\right)$ of cardinality $m\left(Q_{d-r},t\right)$ which percolates with respect to the $t$-neighbour bootstrap percolation process in $Q_{d-r}$. 

Given $x\in V(Q_d)$, let $[x]_r$ and $[x]_{d-r}$ denote the vectors obtained by restricting $x$ to its first $r$ coordinates and last $d-r$ coordinates, respectively. We partition $\{0,1\}^r$ into $r+1$ sets $L_0,\dots,L_r$ such that $L_i$ consists of the vectors whose coordinate sum is equal to $i$. We construct a percolating set $A_0$ in $Q_d$. Given $x\in V(Q_d)$, we include $x$ in $A_0$ if one of the following holds:
\begin{itemize}
\item $[x]_r\in L_1$ and either
\begin{itemize}
\item $[x]_r = (1,0,\dots,0)$ and $[x]_{d-r}\in B_r$.
\item $[x]_r \neq (1,0,\dots,0)$ and $[x]_{d-r}\in B_{r-1}$.
\end{itemize}
\item $[x]_r\in L_{2j+1}$ for some $1\leq j\leq \left\lceil r/2\right\rceil-1$ and $[x]_{d-r}\in B_{r-2j}$. 
\end{itemize}
It is clear that 
\[|A_0| = m\left(Q_{d-r},r\right) + (r-1)m\left(Q_{d-r},r-1\right) + \sum_{j=1}^{\left\lceil r/2\right\rceil-1}\binom{r}{2j+1}m\left(Q_{d-r},r-2j\right)\]
by construction. We will be done if we can show that $A_0$ percolates with respect to the $r$-neighbour bootstrap percolation process. 

We begin by showing that every vertex $x$ with $[x]_r\in L_0\cup L_1$ is eventually infected. First, we can infect every vertex $x$ such that $[x]_r = (1,0,\dots,0)$, one by one in some order, by definition of $B_r$. Next, consider a vertex $x$ such that $[x]_r\in L_0$ and $[x]_{d-r}\in B_{r-1}$. Then $x$ has $r-1$ neighbours $z\in A_0$ such that $[z]_r\neq (1,0,\dots,0)$, by construction, and one infected neighbour $y$ such that $[y]_r=(1,0,\dots,0)$. Thus, every such $x$ becomes infected. Now, by definition of $B_{r-1}$, the remaining vertices $x$ such that $[x]_r\in L_0$ can be infected since every such vertex has an infected neighbour $y$ such that $[y]_r=(1,0,\dots,0)$. Finally, each vertex $x$ such that $x\neq (1,0,\dots,0)$ and $[x]_r\in L_1$ becomes infected using the definition of $B_{r-1}$ and the fact that every vertex $y$ with $[y]_r\in L_0$ is already infected. 

Now, suppose that, for some $1\leq j\leq \left\lceil r/2\right\rceil-1$ every vertex $x$ such that $[x]_r \in L_0\cup\cdots\cup L_{2j-1}$ is already infected. We show that every vertex $x$ with $[x]_r\in L_{2j}\cup L_{2j+1}$ is eventually infected. First, consider a vertex $x$ with $[x]_r\in L_{2j}$ and $[x]_{d-r}\in B_{r-2j}$. Such a vertex has $2j$ infected neighbours $y$ such that $[y]_r\in L_{2j-1}$ and $r-2j$ neighbours $z$ such that $[z]_{r}\in L_{2j+1}\cap A_0$. Therefore, every such $x$ becomes infected. Now, by definition of $B_{r-2j}$, the remaining vertices $x$ such that $[x]_r\in L_{2j}$ can be infected since every such vertex has $2j$ infected neighbours $y$ such that $[y]_r\in L_{2j-1}$. Finally, each vertex $x$ such that $[x]_r\in L_{2j+1}$ becomes infected using the definition of $B_{r-2j}$ and the fact that every vertex $y$ with $[y]_r\in L_{2j-1}$ is already infected. 

Finally, if $r$ is even, then we need to show that every vertex of $L_r$ becomes infected. Every such vertex has precisely $r$ neighbours in $L_{r-1}$. Thus, given that every vertex of $L_{r-1}$ is infected, $x$ becomes infected as well. This completes the proof.
\end{proof}

\begin{figure}[ht]
\newcommand{\Depth}{2.1}
\newcommand{\Width}{1.3}
\tikzstyle{loosely dashed}=          [dash pattern=on 5pt off 3pt]
\begin{center}
\begin{tikzpicture}

\node[circle,draw,minimum width=22pt] (A) at (0,0){};
\node[circle,draw,minimum width=22pt] (B) at (-\Depth,\Width){$3$};
\node[circle,draw,minimum width=22pt] (C) at (0,2*\Width){};
\node[circle,draw,minimum width=22pt] (D) at (\Depth,\Width){$2$};
\node[circle,draw,minimum width=22pt] (E) at (0,\Width){$2$};
\node[circle,draw,minimum width=22pt] (F) at (-\Depth,2*\Width){};
\node[circle,draw,minimum width=22pt] (G) at (0,3*\Width){$1$};
\node[circle,draw,minimum width=22pt] (H) at (\Depth,2*\Width){};

\draw[very thick] (A) -- (B) -- (C) -- (D) -- (A);
\draw[very thick] (A) -- (E);
\draw[very thick] (B) -- (F);
\draw[very thick] (C) -- (G);
\draw[very thick] (D) -- (H);
\draw[very thick] (E) -- (F) -- (G) -- (H) -- (E);
\end{tikzpicture}
\end{center}

\caption{An illustration of the set $A_0$ constructed in the proof of Lemma~\ref{generalRUpper} in the case $r=3$. Each node represents a copy of $Q_{d-3}$. The set $A_0$ consists of a copy of $B_i$ on each node labelled $i\in\{1,2,3\}$.}
\label{construction}
\end{figure}
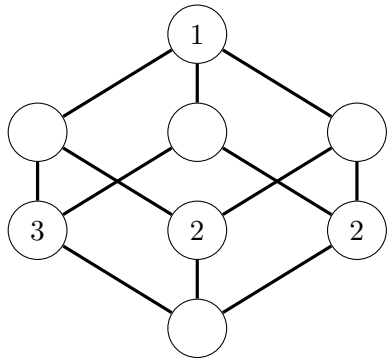

We remark that the recursion in Lemma~\ref{generalRUpper} gives a bound of the form $m\left(Q_d,r\right)\leq \frac{1+o(1)d^{r-1}}{r!}$ where the second order term is better than the one in \eqref{Steiner}. Next, we prove Theorem~\ref{r=3Upper}.

\begin{proof}[Proof of Theorem~\ref{r=3Upper}]
The lower bound follows from Theorem~\ref{hyper}. We prove the upper bound by induction on $d$. First, we settle the cases $d\in\{3,\dots,8\}$. For notational convenience, we associate each element $v$ of $\{0,1\}^d$ with the of subset of $[d]$ for which $v$ is the characteristic vector. Moreover, we identify each non-empty subset of $[d]$ with the concatenation of its elements (e.g. $\{1,3,7\}$ is written $137$). One can verify (by hand or by computer) that the set $A_0^d$, defined below, percolates with respect to the $3$-neighbour bootstrap percolation process in $Q_d$ and that it has the cardinality $\left\lceil\frac{d(d+3)}{6}\right\rceil+1$. 
\begin{align*}A_0^3 &:= \{1,2,3,123\},\\
A_0^4 &:= \left(A_0^3\setminus\{3\}\right)\cup\{134,4,234\},\\
A_0^5 &:= \left(A_0^4\setminus\{134\}\right)\cup\{135,245,12345\},\\
A_0^6 &:= \left(A_0^5\setminus\{135,245\}\right)\cup\{346,12356,456,23456\},\\
A_0^7 &:= \left(A_0^6\setminus\{346\}\right)\cup\{13457,24567,12367,1234567\},\\
A_0^8 &:= \left(A_0^7\setminus\{13457,24567\}\right)\cup\{34568,1234578,34678,25678,2345678\}.\end{align*}

Now, suppose $d\geq9$ and that the theorem holds for smaller values of $d$. If $d$ is odd, then we apply Lemma~\ref{generalRUpper} to obtain
\[m\left(Q_d,3\right)\leq m\left(Q_{d-3},3\right)+ 2m\left(Q_{d-3},2\right)+ m\left(Q_{d-3},1\right).\]
Clearly, $m\left(Q_{d-3},1\right)=1$ and it is easy to show that $m\left(Q_{d-3},2\right) \leq \frac{d-3}{2}+1$ (since $d-3$ is even). Therefore, by the inductive hypothesis,
\[m\left(Q_d,3\right)\leq \left\lceil\frac{(d-3)d}{6}\right\rceil+1 + 2\left(\frac{d-3}{2}+1\right)+1 = \left\lceil\frac{d(d+3)}{6}\right\rceil+1.\]

Now, suppose that $d\geq10$ is even. For $t\in\{1,2,3\}$, let $B_t$ be a subset of $V\left(Q_{d-6}\right)$ of cardinality $m\left(Q_{d-6},t\right)$ which percolates with respect to the $t$-neighbour bootstrap percolation process on $Q_{d-6}$ and let $A_0^6$ be as above. Given a vector $x\in V(Q_d)$, let $[x]_6$ be the restriction of $x$ to its first six coordinates and $[x]_{d-6}$ be the restriction of $x$ to its last $d-6$ coordinates. We define a subset $A_0$ of $V(Q_d)$. We include a vertex $x\in V(Q_d)$ in $A_0$ if $[x]_6\in A_0^6$ and one of the following holds:
\begin{itemize}
\item $[x]_6 = (0,0,1,1,0,1)$ and $[x]_{d-6}\in B_3$.
\item $[x]_6 \neq (0,0,1,1,0,1)$ and we have $x_5=1$ and $[x]_{d-6}\in B_2$.
\item $x_5=x_6=0$ and $[x]_{d-6}\in B_1$. 
\end{itemize}
The fact that $A_0$ percolates follows from arguments similar to those given in the proof of Lemma~\ref{generalRUpper}; we omit the details. By construction, 
\[|A_0| = m\left(Q_{d-6},3\right) + 4m\left(Q_{d-6},2\right) + 5m\left(Q_{d-6},1\right)\]
which equals
\[\left\lceil\frac{(d-6)(d-3)}{6}\right\rceil+1 + 4\left(\frac{d-6}{2}+1\right) + 5 = \left\lceil\frac{d(d+3)}{6}\right\rceil+1\]
by the inductive hypothesis. The result follows. 
\end{proof}

\section{Concluding Remarks}
\label{concl}

In this paper, we have determined the main asymptotics of $m\left(Q_d,r\right)$ for fixed $r$ and $d$ tending to infinity and obtained a sharper result for $r=3$. We wonder whether sharper asymptotics are possible for general $r$. 

\begin{ques}
For fixed $r\geq4$ and $d\to \infty$, does 
\[\frac{m\left(Q_d,r\right)-\frac{d^{r-1}}{r!}}{d^{r-2}}\]
converge? If so, what is the limit?
\end{ques}

As Theorem~\ref{r=3Upper} illustrates, it may be possible to obtain an exact expression for $m\left(Q_d,r\right)$ for some small fixed values of $r$. The first open case is the following.

\begin{prob}
Determine $m\left(Q_d,4\right)$ for all $d\geq4$. 
\end{prob}

Using a computer, we have determined that $m\left(Q_5,4\right) =14$, which is greater than the lower bound of $13$ implied by Theorem~\ref{hyper}. Thus, Theorem~\ref{hyper} is not tight for general $d$ and $r$. However, we wonder whether it could be tight when $r$ is fixed and $d$ is sufficiently large.

\begin{ques}
For fixed $r\geq4$, is it true that
\[m\left(Q_d,r\right)= 2^{r-1} + \left\lceil\sum_{j=1}^{r-1}\binom{d-j-1}{r-j}\frac{j2^{j-1}}{r}\right\rceil\]
provided that $d$ is sufficiently large?
\end{ques}

Another direction that one could take is to determine $\wsat(G,S_{r+1})$ for other graphs $G$. For example, one could consider the $d$-dimensional torus $\mathbb{Z}_n^d$. 

\begin{prob}
\label{torus}
Determine $\wsat\left(\mathbb{Z}_n^d,S_{r+1}\right)$ for all $n,d$ and $r$. 
\end{prob}

\begin{ack}
The authors would like to thank Eoin Long for encouraging us to work on this problem and for several stimulating discussions during the 18th Midrasha Mathematicae at the Institute of Advanced Studies in Jerusalem in 2015. We would also like to thank Micha{\l} Przykucki and Alex Scott for several enlightening discussions. In particular, we are grateful to the latter for bringing \eqref{wsatperc} to our attention and for helpful comments regarding the presentation of this paper. We also thank Hamed Hatami, Yingjie Qian, Oliver Riordan, Andrew Thomason and an anonymous referee for comments which have helped us to improve the presentation of the paper. 
\end{ack}

\begin{noteAdded}
Following the submission of this paper, Hambardzumyan, Hatami and Qian~\cite{McGill} discovered a new (and shorter) proof of our main result using the so called ``polynomial method.'' They have also applied their method to obtain a full solution to Problem~\ref{torus}. 
\end{noteAdded}

\begin{noteAdded}
After submitting this paper, the authors discovered that weak saturation of the star $S_3$ in $[n]^2$ had been studied in a 1984 paper of physicists Lenormand and Zarcone~\cite{LZ} as a ``bond percolation'' variant of bootstrap percolation. In~\cite{LZ}, Lenormand and Zarcone estimated (using large simulations) the critical probability for the event that a random spanning subgraph of $[n]^2$ with edge probability $p$ is weakly $([n]^2,S_3)$-saturated. This was proposed as a model of the spreading of liquid in a network of capillaries. It seems to be an interesting problem to obtain precise asymptotics for this critical probability for large $n$. This can be seen as a bond percolation analogue to the result of Holroyd~\cite{Holroyd} on the $2$-neighbour bootstrap percolation process in $[n]^2$. 
\end{noteAdded}

\bibliographystyle{plain}

\appendix

\section*{Appendix: An Explicit Linear Algebraic Construction}

Given integers $k$ and $\ell$ with $k\geq \ell\geq0$, we construct an explicit subspace $X$ of $\mathbb{R}^k$ of dimension $k-\ell$ such that $|\supp(x)|\geq \ell+1$ for every $x\in X\setminus\{0\}$. This can be seen as an alternative proof of Lemma~\ref{support}. 

Let $\alpha_1,\dots,\alpha_k$ be arbitrary distinct real numbers. Define $f$ to be a map from the space of polynomials $p$ of degree at most $k-\ell-1$ in a single variable over $\mathbb{R}$ to $\mathbb{R}^k$ defined by
\[f:p\mapsto (p(\alpha_1),\dots, p(\alpha_k)).\]
Define $X$ to be the range of $f$. Clearly $X$ is a vector space. Every polynomial of degree at most $k-\ell-1$ has at most $k-\ell-1$ distinct roots. This implies that every non-zero $x\in X$ must have $|\supp(x)|\geq \ell+1$. It also implies that the kernel of $f$ is $\{0\}$. So, the dimension of $X$ is equal to the dimension of the space of polynomials of degree at most $k-\ell-1$, which is of course $k-\ell$. We thank Hamed Hatami for bringing this family of constructions to our attention. 

Let us exhibit a particular basis of the space $X$. Note that by considering the polynomials $p_j(x):=x^j$ for $0\leq j\leq k-\ell-1$, we see that $X$ contains the vectors
\[(1,1,\dots,1)\]
\[(\alpha_1,\alpha_2,\dots,\alpha_k)\]
\[(\alpha_1^2,\alpha_2^2,\dots, \alpha_k^2)\]
\[\vdots\]
\[(\alpha_1^{k-\ell-1},\alpha_2^{k-\ell-1},\dots, \alpha_k^{k-\ell-1}).\]
The $(k-\ell)\times (k-\ell)$ matrix whose row vectors are the projections of the above vectors onto the first $k-\ell$ coordinates is known as a \emph{Vandermonde matrix}. The determinant of this matrix is well known to be $\prod_{1\leq i<j\leq k-\ell}\left(\alpha_i-\alpha_j\right)$, which is non-zero since $\alpha_1,\dots,\alpha_{k-\ell}$ are distinct. Therefore, the vectors in the above list are linearly independent and so they form a basis for $X$.
\end{document}